\theoremstyle{plain}
\newtheorem{theorem}{Theorem}[section]
\newtheorem{corollary}[theorem]{Corollary}
\theoremstyle{definition}
\newtheorem{definition}[theorem]{Definition}
\newtheorem{example}[theorem]{Example}
\newtheorem{remark}[theorem]{Remark}
\renewcommand{\bar}[1]{\overline{#1}}
\renewcommand{\hat}[1]{\widehat{#1}}
\numberwithin{equation}{section}
\numberwithin{figure}{section}
\numberwithin{table}{section}
\numberwithin{algorithm}{section}
\definecolor{helmholtzdarkblue}{RGB}{0,40,100} 
\definecolor{helmholtzlightblue}{RGB}{20,200,255} 
\definecolor{helmholtzhighlight}{RGB}{205,238,251}
\definecolor{helmholtzpaleblue}{RGB}{236,251,253} 
\definecolor{logoblue}{RGB}{0,90,160}  
\definecolor{mint}{RGB}{5,229,186}  
\colorlet{helmholtzmint}{mint}
\tikzstyle{startstop} = [rectangle, rounded corners, minimum width=3cm, minimum height=1cm, text centered, text width=4.5cm, draw=helmholtzdarkblue, fill=blue!30]
\tikzstyle{io} = [trapezium, trapezium left angle=70, trapezium right angle=110, minimum width=3cm, minimum height=1cm, text centered, draw=black, fill=blue!30]
\tikzstyle{process} = [rectangle, rounded corners, minimum width=3cm, minimum height=1cm, text centered, text width=10cm, draw=black, fill=red!10]
\tikzstyle{decision} = [rectangle, minimum width=2.5cm, minimum height=5cm, text centered, text width=3cm, draw=black, fill=blue!30]
\tikzstyle{alternative} =  [rectangle, minimum width=2.5cm, minimum height=1cm, text centered, text width=3cm, draw=black, fill=blue!30]
\tikzstyle{alternative2} = [diamond, fill=green!10, text width=2.2cm, text centered]
\tikzstyle{arrow} = [thick,->,>=stealth]
\tikzstyle{circ} = [circle, text centered, draw=black, text width=1.7cm, fill=blue!10]
\tikzstyle{box} = [rectangle, minimum width=2.5cm, minimum height=5cm, text centered, text width=10cm, draw=helmholtzdarkblue, fill=blue!20]
\tikzstyle{groupingred} = [rectangle, fill=black!10,minimum width=15cm, minimum height=17cm,black!10]
\tikzstyle{groupingblue} = [rectangle, fill=blue!10,minimum width=38cm, minimum height=10cm,blue!10]
\definecolor{helmholtzdarkblue}{RGB}{0,40,100} 
\colorlet{color1}{helmholtzdarkblue}
\newcommand{\rset}{\mathbb{R}}
\newcommand{\nset}{\mathbb{N}}
\newcommand{\zset}{\mathbb{Z}}
\newcommand{\COMMA}{,}
\newcommand{\Ordo}[1]{{\mathcal{O}}\left(#1\right)}
\def\BState{\State\hskip-\ALG@thistlm}
\title{Automated Importance Sampling via Optimal Control for Stochastic Reaction Networks:  A Markovian Projection--based Approach}
\author[1]{Chiheb Ben Hammouda}
\author[2]{Nadhir Ben Rached}
\author[3,4]{Ra\'ul Tempone}
\author[5]{Sophia Wiechert\thanks{Corresponding author: wiechert@uq.rwth-aachen.de}}
\affil[1]{Utrecht University, Mathematical Institute, 3584 CD Utrecht, The Netherlands.}
\affil[2]{University of Leeds, School of Mathematics, Woodhouse, Leeds LS2 9JT, UK.}
\affil[3]{King Abdullah University of Science and Technology (KAUST), Computer, Electrical and Mathematical Sciences \& Engineering Division (CEMSE), Thuwal 23955-6900, Saudi Arabia.}
\affil[4]{RWTH Aachen University, Alexander von Humboldt Professor in Mathematics for Uncertainty Quantification, 52062 Aachen, Germany.} 
\affil[5]{RWTH Aachen University, Chair of Mathematics for Uncertainty Quantification, Pontdriesch 14-16,
	52062 Aachen, Germany.}
\begin{document} 
    \date{}
\maketitle

\begin{abstract} 
   We propose a novel alternative approach to our previous work (Ben Hammouda et al., 2023) to improve the efficiency of Monte Carlo (MC) estimators for rare event probabilities for stochastic reaction networks (SRNs). In the same spirit of (Ben Hammouda et al., 2023),  an efficient path-dependent measure change is derived based on a  connection between determining optimal importance sampling (IS) parameters within a class of probability measures and a stochastic optimal control formulation, corresponding to solving a variance minimization problem. 
   In this work, we propose a novel approach to address the encountered curse of dimensionality by mapping the problem to a significantly lower-dimensional space via a Markovian projection (MP) idea. The output of this model reduction technique is a low-dimensional SRN (potentially even one dimensional) that preserves the marginal distribution of the original high-dimensional SRN system. The dynamics of the projected process are obtained by solving a related optimization problem via a discrete $L^2$ regression. By solving the resulting projected Hamilton--Jacobi--Bellman (HJB) equations for the reduced-dimensional SRN, we obtain projected IS parameters, which are then mapped back to the original full-dimensional  SRN system, resulting in an efficient IS-MC estimator for rare events probabilities of the full-dimensional SRN.  Our analysis and numerical experiments reveal that the proposed MP-HJB-IS approach substantially reduces the MC estimator variance, resulting in a lower computational complexity in the rare event regime than standard MC estimators.
   
 \textbf{Keywords:} stochastic reaction networks, tau-leap, importance sampling, stochastic optimal control, Markovian projection, rare event.
\end{abstract}

\thispagestyle{plain}

\setcounter{tocdepth}{1}

\section{Introduction}

This paper proposes an efficient estimator for rare event probabilities for a particular class of continuous-time Markov processes, stochastic reaction networks (SRNs). We design an automated importance sampling (IS) approach based on the approximate explicit tau-leap (TL) scheme to build a Monte Carlo (MC) estimator for rare event probabilities of SRNs. The used IS change of measure was introduced in \cite{ben2023learning}, wherein the optimal IS controls were determined via a stochastic optimal control (SOC) formulation. In that same work, we also presented a learning-based approach to avoid the curse of dimensionality. Building on that work, we propose an alternative method for high-dimensional SRNs that leverages dimension reduction through Markovian projection (MP) and then recovers the optimal IS controls of the full-dimensional SRNs as a mapping from the solution in lower-dimensional space, potentially one. To the best of our knowledge, we are the first to establish the MP framework for the SRN setting to solve an IS problem.

An SRN (refer to Section~\ref{sec:SRNs} for a brief introduction and \cite{ben2020hierarchical} for more details)
describes the time evolution of a set of species through reactions and can be found in a wide range of applications, such as biochemical reactions, epidemic processes \cite{brauer2001mathematical,anderson2015stochastic}, and transcription and translation in genomics and virus kinetics \cite{srivastava2002stochastic,hensel2009stochastic}. For  a $d$-dimensional SRN, $\mathbf{X}:[0,T]\rightarrow \nset^d$, with the given final time $T>0$, we aim to determine accurate and computationally efficient MC estimations for the expected value $\mathbb{E}[g(\mathbf{X}(T))]$. The observable $g:\nset^d\to\rset$ is a given scalar function of $\mathbf{X}$, where indicator functions $g(\mathbf{x})=\mathbbm{1}_{\{\mathbf{x} \in \mathcal{B}\}}$ are of interest to estimate the rare event probability $\mathbb{P}(\mathbf{X}(T)\in\mathcal{B})\ll 1$, where $\mathcal{B} \subset \rset^d$.

	The quantity of interest, $\mathbb{E}[g(\mathbf{X}(T))]$, is the solution to the corresponding Kolmogorov backward equations \cite{bayer2016efficient}. Solving these ordinary differential equations (ODEs) in closed form is infeasible for most SRNs; thus, numerical approximations based on discretized schemes are used to derive solutions.
 A drawback of these approaches is that, without using dimension reduction techniques, the computational cost scales exponentially with the number of species $d$. To avoid the curse of dimensionality, we propose estimating $\mathbb{E}[g(\mathbf{X}(T))]$ using MC methods.

Numerous schemes have been developed to simulate the exact sample paths of SRNs. These include the stochastic simulation algorithm introduced by Gillespie in \cite{gillespie1976general} and the modified next reaction method proposed by Anderson in \cite{anderson2007modified}. However, when SRNs involve reaction channels with high reaction rates, simulating exact realizations of the system can be computationally expensive. To address this issue, Gillespie \cite{gillespie2001approximate} and Aparicio and Solari  \cite{aparicio2001population} independently proposed the explicit-TL method (see Section~\ref{sec:exp_tau}), which approximates the paths of $\mathbf{X}$ by evolving the process with fixed time steps while maintaining constant reaction rates within each time step.
Additionally, other simulation schemes have been proposed to handle situations with well-separated fast and slow time scales \cite{cao2005trapezoidal,rathinam2007reversible,abdulle2010chebyshev,ahn2013implicit,moraes2016multilevel,hammouda2017multilevel}.

In order to compute MC estimates of $\mathbb{E}[g(\mathbf{X}(T))]$ more efficiently, different  variance reduction techniques have been proposed in the context of SRNs. In the spirit of the multilevel MC (MLMC) idea \cite{giles2008multilevel,giles2015multilevel}, various  MLMC-based methods \cite{Anderson2012,lester2015adaptive,moraes2016multilevel,hammouda2017multilevel,ben2020importance} have been introduced to overcome different challenges in this context.  Moreover, as the naive MC and MLMC estimators have high computational costs when used for estimating rare event probabilities, different IS approaches \cite{kuwahara2008efficient,gillespie2009refining,roh2010state,daigle2011automated,cao2013adaptively,gillespie2019guided,roh2019data,ben2023learning} have been proposed. 

To estimate various statistical quantities efficiently for SRNs (specifically rare event probabilities), we use the path-dependent IS approach originally introduced in \cite{ben2023learning}. This class of probability measure change is based on modifying the rates of the Poisson random variables used to construct the TL paths. In \cite{ben2023learning}, it is shown how optimal IS controls are obtained by minimizing the second moment of the IS estimator (equivalently, the variance), representing the cost function of the associated SOC problem, and that the corresponding value function solves  a dynamic programming relation (see Section~\ref{sec:Approach Formulation} for revising these results). In this work, we generalize the discrete-time dynamic programming relation by a set of continuous-time ODEs, the Hamilton--Jacobi--Bellman (HJB) equations, allowing the formulation of optimal IS controls in continuous time. Compared to the discrete-time IS control formulation presented in \cite{ben2023learning}, the continuous-time formulation offers the advantage that it provides a curve of IS controls over time instead of a discrete set. This allows its application for any time stepping in the IS-TL paths and thereby eliminates the need for ad-hoc interpolations often needed  in the discrete setting.

In the multidimensional setting,  the cost of solving the backward HJB equations increases exponentially with respect to the dimension $d$ (curse of dimensionality). In \cite{ben2023learning}, we proposed a learning-based approach to reduce this effect. In that approach, the value function is approximated using an ansatz function, the parameters of which are learned through a stochastic optimization algorithm (see Figure~\ref{fig:learningscheme} for a schematic illustration of the approach). In this  work, we present an alternative method using a dimension reduction approach for SRNs (see Figure~\ref{fig:schematic} for a schematic illustration of the approach). The proposed methodology is to adapt the MP idea originally introduced in \cite{gyongy1986mimicking} for the setting of diffusion-type stochastic differential equations (SDEs) to the SRN framework, resulting in a significantly lower-dimensional process, preserving the marginal distribution of the original full-dimensional SRN. The propensities characterizing the lower-dimensional MP process can be  approximated using $L^2$ regression. Using the resulting low-dimensional SRN, we derive an approximate value function and, consequently, near-optimal IS controls while reducing the effect of the curse of dimensionality. By mapping the IS controls to the original full-dimensional SRNs, we derive an unbiased IS-MC estimator for the TL scheme. Compared to the learning-based approach presented in \cite{ben2023learning}, this novel MP-IS approach eliminates the need for an ansatz function to model the value function. This approach allows its application to general observables $g$ that differ from indicator functions for rare event estimation, because no prior knowledge regarding the shape of the value function and suitable ansatz functions is required.

To the best of our knowledge, we are the first to establish the MP idea for SRNs and apply it to derive an efficient pathwise IS for MC methods. Initially, the MP idea was introduced for Itô stochastic processes in \cite{krylov1987nonlinear,gyongy1986mimicking} and was later generalized to martingales and semimartingales \cite{kurtz2001stationary,bentata2009mimicking}. In addition, MP has been widely applied for dimension reduction in SDEs \cite{legoll2010effective}, particularly in financial applications \cite{piterbarg2006markovian,djehiche2014risk}. For instance, in \cite{bayer2019implied}, solving  HJB equations for an MP process was pursued but in the setting of Itô SDEs with the application of pricing American options. In \cite{hartmann2016model}, MP was used for control problems and IS problems for rare events in high-dimensional diffusion processes with multiple time scales. In this work, we introduce the general dimension reduction framework of MP for SRNs such that it can be applied to other problems beyond the selected IS application. (e.g., solving the chemical master equation \cite{mikeev2019approximate} or the Kolmogorov backward equations \cite{moraes2015simulation}).

The remainder of this work is organized as follows. Sections~\ref{sec:SRNs}, \ref{sec:exp_tau}, \ref{sec:Monte Carlo (MC) estimator}, and \ref{sec:Importance Sampling} recall the relevant SRN, TL, MC, and IS notations and definitions from \cite{ben2023learning}. Next, Section~\ref{sec:ISOC} reviews the connection between IS and SOC by introducting the IS scheme, the value function, and the corresponding dynamic programming theorem from \cite{ben2023learning} in Section~\ref{sec:Approach Formulation}. Then, Section~\ref{sec:HJB} extends the framework to a continuous-time formulation leading to the continuous-time value function and deriving the corresponding HJB equations.
Section~\ref{sec:MP} presents the MP technique for SRNs and shows how the projected dynamics can be computed using $L^2$ regression. Next, Section~\ref{sec:MP_IS} addresses the curse of dimensionality of high-dimensional SRNs occurring from the optimal IS scheme in Section~\ref{sec:ISOC}  by combining the IS scheme with MP (Section~\ref{sec:MP}) to derive near-optimal IS controls. Finally, Section~\ref{sec:numexp} presents the numerical results for the rare event probability estimation to demonstrate the efficiency of the proposed MP-IS approach compared to a standard TL-MC estimator.  

\begin{figure}[h!]
	\caption{Schematic diagram of the learning-based approach in \cite{ben2023learning}.\label{fig:learningscheme}}
	\includegraphics[trim={0 3cm 1cm 2cm},clip,width=\textwidth]{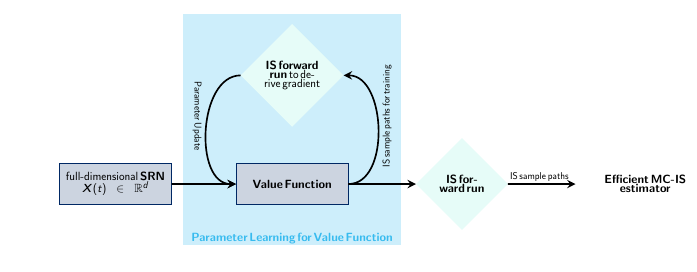}
\end{figure}
\begin{figure}[h!]
	\caption{Schematic diagram of the MP-IS approach. \label{fig:schematic}}
	\includegraphics[trim={0 3cm 1cm 2cm},clip,width=\textwidth]{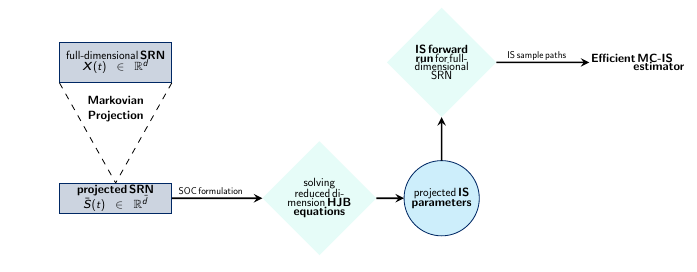}
\end{figure}

\subsection{Stochastic Reaction Networks (SRNs)} 
\label{sec:SRNs}

We recall from \cite{ben2023learning} that an SRN describes the time evolution for a homogeneously mixed chemical reaction system, in which  $d$ distinct species interact through $J$ reaction channels. Each reaction channel  $\mathcal{R}_j$ , $j=1\dots,J$, is given by the relation
\begin{equation}
	\alpha_{j,1} S_1+\dots+\alpha_{j,d} S_d \overset{\theta_j}{\rightarrow}\beta_{j,1} S_1+\dots+\beta_{j,d} S_d,
\end{equation}
where $\alpha_{j,i}$ molecules of species $S_i$ are consumed and $\beta_{j,i}$ molecules are produced. The positive constants $\{\theta_j\}_{j=1}^J$ represent the reaction rates.

This process can be modeled by a Markovian pure jump process,  $\mathbf{X}:[0,T]\times \Omega \to \nset^d$, where ($\Omega$, $\mathcal{F}$, $\mathbb{P}$)  is a probability space. 
We are interested in the time evolution of the state vector, 
\begin{equation}  
	\mathbf{X}(t) = \left(X_1(t), \ldots, X_d(t)\right) \in
	\nset^d ,
\end{equation}
where the $i$-th component,  $X_i(t)$, describes the abundance of the $i$th species present in the system at time $t$. 
The process $\mathbf{X}$  is a continuous-time, discrete-space Markov process characterized by Kurtz's random time change representation \cite{kurtz_2005}:
\begin{equation}
	\label{eq:exact_process}
	\mathbf{X}(t)= \mathbf{x}_{0}+\sum_{j=1}^{J} Y_j \left(\int_0^t  a_{j}(\mathbf{X}(s)) \,  ds \right)  \boldsymbol{\nu}_j,
\end{equation}
where $Y_j:\rset_+{\times} \Omega \to \nset$ are independent unit-rate Poisson processes and the stoichiometric vector is defined as $\boldsymbol{\nu}_j=\left(\beta_{j,1}-\alpha_{j,1},\dots,\beta_{j,d}-\alpha_{j,d}\right) \in \zset^d$. 

For each reaction channel $\mathcal{R}_j$, the propensity function $a_j:\mathbb{N}^d\rightarrow \mathbb{R}_+$ obeys the \textit{non-negativity assumption} (i.e. the system can not attain negative states), which is that $a_j(\mathbf{X})=0$ for $\mathbf{x}$ such that $\mathbf{X}+\mathbf{\nu}_j\notin\mathbb{N}^d$. In our numerical simulations, we consider a propensity derived from the \textit{stochastic mass-action kinetic} principle 
\begin{equation}\label{eq:prop_dynamics}
	a_j(\mathbf{x}):=\theta_j \prod_{i=1}^d \frac{x_i!}{(x_i-\alpha_{j,i})!} \mathbf{1}_{\{x_i\ge \alpha_{j,i}\}}\COMMA
\end{equation}
where $x_i$ is the counting number for species $S_i$. However, the approach presented in this work is not restricted to the particular structure of the propensity function in \eqref{eq:prop_dynamics} (see Remark \ref{rem:prop}).

\subsection{Explicit Tau-Leap Approximation}
\label{sec:exp_tau}
The explicit-TL scheme is a pathwise approximate method based on Kurtz's random time change representation \eqref{eq:exact_process} \cite{gillespie2001approximate,aparicio2001population}. It was originally introduced to overcome the computational drawbacks of exact methods, which become computationally expensive when many reactions fire during a short time interval. 
For a uniform time mesh  $\{t_{0}=0, t_{1},...,t_{N}= T\}$ with step size $\Delta t=\frac{T}{N}$ and a given initial value $\mathbf{X}(0)=\mathbf{x}_0$ , the  explicit-TL approximation for $\mathbf{X}$ is defined by

\begin{align}\label{eq:TL_approx}
	\hat{\mathbf{X}}_0&:= \mathbf{x}_{0}\nonumber\\
	\hat{\mathbf{X}}^{\Delta t}_k&:=\max \left(\textbf{0},\hat{\mathbf{X}}^{\Delta t}_{k-1}+\sum_{j=1}^{J} \mathcal{P}_{k-1,j}\left(a_{j}(\hat{\mathbf{X}}^{\Delta t}_{k-1}) \Delta t\right)  \boldsymbol{\nu}_{j} \right) \COMMA \: 1 \le k \le N,
\end{align}
where $\{\mathcal{P}_{k,j}(r_{k,j})\}_{\{1\leq j\leq J \}}$ are independent Poisson random variables with respective rates $r_{k,j}:=a_{j}(\hat{\mathbf{X}}^{\Delta t}_{k})\Delta t$ conditioned on the current state $\hat{\mathbf{X}}^{\Delta t}_{k}$. The maximum in \eqref{eq:TL_approx} is applied entry-wise. In each TL step, the current state is projected to zero to prevent the process from exiting the lattice (i.e., producing negative values).

\subsection{Biased Monte Carlo estimator}  
\label{sec:Monte Carlo (MC) estimator}
We let $\mathbf{X}$ be an SRN and $g: \rset ^{d} \rightarrow \rset$ be a  scalar observable. 
For a given final time $T$, we estimate $\mathbb{E} \left[g(\mathbf{X}(T))\right]$ using the standard MC-TL estimator: 
\begin{equation}\label{eq:MC}
	\mu_{M} :=\frac{1}{M}\sum_{m=1}^{M} g(\hat{\mathbf{X}}^{\Delta t}_{[m]}(T))\COMMA
\end{equation}
where $\{\hat{\mathbf{X}}^{\Delta t}_{[m]}(T)\}_{m=1}^M$ are independent TL samples. 

The global error for the proposed MC estimator has the following error decomposition:
\begin{align}\label{eq:error_split1}
	\left|\mathbb{E}[g(\mathbf{X}(T))]-\mu_M\right|\leq \underbrace{\left|\mathbb{E}[g(\mathbf{X}(T))]-\mathbb{E}[g(\hat{\mathbf{X}}^{\Delta t}(T))]\right|}_{\text{Bias}}+\underbrace{\left|\mathbb{E}[g(\hat{\mathbf{X}}^{\Delta t}(T))]-\mu_M\right|}_{\text{Statistical Error}}.
\end{align}

Under some assumptions, the TL scheme has a weak order, $\Ordo{\Delta t}$ \cite{li2007analysis}, that is, for sufficiently small $\Delta t$,
\begin{align}\label{eq:weak}
	\left |\mathbb{E} \left[g(\mathbf{X}(T))- g(\hat{\mathbf{X}}^{\Delta t}(T) )\right]\right |\leq C\Delta t
\end{align}
where $C>0$.

The bias and statistical error can be bound equally using $\frac{TOL}{2}$ to achieve the desired accuracy, $\text{TOL}$, with a confidence level of $1-\alpha$ for $\alpha\in (0,1)$, which can be achieved by the step size:
\begin{align}\label{eq:dtstar}
	\Delta t(\text{TOL})= \frac{\text{TOL}}{2\cdot C} 
\end{align}
and 
\begin{align}\label{eq:Mstar1}
	M^*(\text{TOL})=C_{\alpha}^2\frac{4\cdot \text{Var}[g(\hat{\mathbf{X}}^{\Delta t}(T))]}{\text{TOL}^2}
\end{align} 
sample paths, where the constant $C_{\alpha}$ is the $(1-\frac{\alpha}{2})-$quantile for the standard normal distribution. We select $C_{\alpha}=1.96$ for a $95\%$ confidence level corresponding to $\alpha =0.05$.

When estimating rare event probabilities, we are interested in the relative error 
	\begin{align*}
		\frac{	\left|\mathbb{E}[g(\mathbf{X}(T))]-\mu_M\right|}{\left| \mathbb{E}[g(\mathbf{X}(T))]\right|}.
		\end{align*}
	In this context, to achieve a prescribed relative tolerance $\text{TOL}_{rel}$, we use step size
	\begin{align}\label{eq:reldt}
			\Delta t_{rel}(\text{TOL}_{rel})= \frac{\text{TOL}_{rel}\left| \mathbb{E}[g(\mathbf{X}(T))]\right|}{2\cdot C} 
		\end{align}
	and
	\begin{align}\label{eq:relM}
			M^*_{rel}(\text{TOL}_{rel})=C_{\alpha}^2\frac{4\cdot \text{Var}[g(\hat{\mathbf{X}}^{\Delta t}(T))]}{\text{TOL}_{rel}^2\left| \mathbb{E}[g(\mathbf{X}(T))]\right|^2}
	\end{align}
	sample paths.

Given that the computational cost to simulate a single path is $\Ordo{ {\Delta t}^{-1}}$,  the expected total computational complexity is $\Ordo{\text{TOL}^{-3}}$ and $\Ordo{\text{TOL}_{rel}^{-3}}$ for the absolute and relative errors, respectively.

\subsection{Importance Sampling}  
\label{sec:Importance Sampling}
Using IS techniques \cite{kuwahara2008efficient,gillespie2009refining,gillespie2019guided,daigle2011automated,cao2013adaptively,gillespie2019guided,roh2019data} can improve the computational costs for the crude MC estimator through variance reduction in \eqref{eq:Mstar1}. For a general motivation, we refer to \cite{ben2023learning} Section~1.4. For illustrating the IS method, let us consider the general problem of estimating  $\mathbb{E}[g(Y)]$, where $g$ is a given observable and $Y$ is a random variable taking values in $\mathbb{R}$ with the probability density function  $\rho_{Y}$. We let $\hat{\rho}_Z$ be the probability density function for an auxiliary real random variable $Z$. The MC estimator under the IS measure is 
\begin{align}
	\mu_{M}^{IS}=\frac{1}{M} \sum_{j=1}^M L(Z_{[j]})\cdot g(Z_{[j]}),
\end{align}
where $Z_{[j]}$ are independent and identically distributed samples from $\hat{\rho}_Z$ for $j=1,\dots,M$ and the likelihood factor is given by $L(Z_{[j]}):=\frac{\rho_Y(Z_{[j]})}{\hat{\rho}_Z(Z_{[j]})}$. The IS estimator retains the expected value of \eqref{eq:MC} (i.e., $\mathbb{E}[L(Z)g(Z)]=\mathbb{E}[g(Y)]$), but the variance can be reduced due to a different second moment $\mathbb{E}\left[\left(L(Z)\cdot g(Z)\right)^2\right]$.

Determining an auxiliary probability measure that substantially reduces the variance compared with the original measure is challenging and strongly depends on the structure of the considered problem. In addition, the derivation of the new measure must come with a moderate additional computational cost to ensure an efficient IS scheme. This work uses the path-dependent change of probability measure introduced in \cite{ben2023learning}, employing an IS measure derived from changing the Poisson random variable rates in the TL paths. Section~\ref{sec:Approach Formulation} recalls the SOC formulation for optimal IS parameters from \cite{ben2023learning} and extends it with a novel HJB formulation. We conclude this consideration in Section~\ref{sec:MP_IS}, combining the IS scheme with a dimension reduction approach to reduce the computational cost.

\section{Importance Sampling via Stochastic Optimal Control Formulation}\label{sec:ISOC}
\subsection{Dynamic Programming for Importance Sampling Parameters}
\label{sec:Approach Formulation}
This section revisits the connection between optimal IS measure determination within a class of probability measures, and the SOC formulated originally in \cite{ben2023learning}. 
We let $\mathbf{X}$ be an SRN as defined in Section~\ref{sec:SRNs} and let $\hat{\mathbf{X}}^{\Delta t}$ denote its TL approximation as given by \eqref{eq:TL_approx}. Then, the goal is to derive a near-optimal IS measure to estimate $\mathbb{E} \left[g(\mathbf{X}(T))\right]$. We limit ourselves to the parameterized class of IS schemes used in \cite{ben2020importance,ben2023learning}:
\begin{align}\label{eq:path_IS}
	\overline{\mathbf{X}}_{n+1}^{\Delta t}&=\max\left(\textbf{0},\overline{\mathbf{X}}_{n}^{\Delta t}+\sum_{j=1}^J\bar{P}_{n,j}\boldsymbol{\nu}_j\right) ,~~~ n=0,\dots,N-1,\\
	\overline{\mathbf{X}}_{0}^{\Delta t}&=\mathbf{x}_0\nonumber,
\end{align}
where the measure change is obtained by modifying the Poisson random variable rates of the TL paths:
\begin{equation}\label{eq:measure_change}
	\bar{P}_{n,j}=\bar{\mathcal{P}}_{n,j}\left(\delta_{n,j}^{\Delta t}(\overline{\mathbf{X}}^{\Delta t}_n)\Delta t\right),~~~ n=0,\dots, N-1, j=1,\dots,J .
\end{equation}
In \eqref{eq:measure_change}, $\delta_{n,j}^{\Delta t}(\mathbf{x})\in\mathcal{A}_{\mathbf{x},j}$ is the control parameter at time step $n$, under reaction $j$, and in state $\mathbf{x}\in\mathbb{N}^d$. In addition, $\{\bar{\mathcal{P}}_{n,j}(r_{n,j})\}_{\{1\leq j\leq J \}}$ are  independent Poisson random variables, conditioned on $\overline{\mathbf{X}}^{\Delta t}_{n}$,  with the respective rates $r_{n,j}:=\delta_{n,j}^{\Delta t}(\overline{\mathbf{X}}^{\Delta t}_n)\Delta t$. The set of admissible controls is
\begin{align}\label{eq:addmissibleset}
	\mathcal{A}_{\mathbf{x},j}=\begin{cases}
		\{0\}&,\text{if }a_j(\mathbf{x})=0\\
		\{y\in\mathbb{R}: y>0\}&,\text{otherwise}.
	\end{cases}
\end{align} 
In the following, we use the vector notation $\left(\boldsymbol{\delta}_n^{\Delta t}(\mathbf{x})\right)_j:=\delta_{n,j}^{\Delta t}(\mathbf{x}) $ and $\left(\bar{\mathbf{P}}_n\right)_j:=\bar{P}_{n,j}$ for $j=1,\dots,J$.

The corresponding likelihood ratio of the path $\{\overline{\mathbf{X}}^{\Delta t}_n: n=0,\dots,N\}$ for the IS parameters $\boldsymbol{\delta}_n^{\Delta t}(\mathbf{x}) \in \times_{j=1}^J \mathcal{A}_{\mathbf{x},j}$ is 
\begin{equation}\label{eq:likelihood}
	L\left(\left(\bar{\mathbf{P}}_0,\dots,\bar{\mathbf{P}}_{N-1}\right),\left(\boldsymbol{\delta}_0^{\Delta t}(\overline{\mathbf{X}}^{\Delta t}_0),\dots,\boldsymbol{\delta}_{N-1}^{\Delta t}(\overline{\mathbf{X}}^{\Delta t}_{N-1})\right)\right)=\prod_{n=0}^{N-1} L_n(\bar{\mathbf{P}}_n,\boldsymbol{\delta}_n^{\Delta t}(\overline{\mathbf{X}}^{\Delta t}_n)),
\end{equation}
where the likelihood ratio update at time step $n$ is 
\begin{align}\label{eq:stepwiselh}
	L_n(\bar{\mathbf{P}}_n,\boldsymbol{\delta}_n^{\Delta t}(\overline{\mathbf{X}}^{\Delta t}_n))
	&=\prod_{j=1}^J\exp\left(-(a_j(\overline{\mathbf{X}}_{n}^{\Delta t})-\delta_{n,j}^{\Delta t}(\overline{\mathbf{X}}^{\Delta t}_n))\Delta t\right)\left(\frac{a_j(\overline{\mathbf{X}}_{n}^{\Delta t})}{\delta_{n,j}^{\Delta t}(\overline{\mathbf{X}}^{\Delta t}_n)}\right)^{\bar{P}_{n,j}}\nonumber\\
	&=\exp\left(-\left(\sum_{j=1}^J a_j(\overline{\mathbf{X}}_{n}^{\Delta t})-\delta_{n,j}^{\Delta t}(\overline{\mathbf{X}}^{\Delta t}_n)\right)\Delta t\right)  \cdot \prod_{j=1}^J\left(\frac{a_j(\overline{\mathbf{X}}_{n}^{\Delta t})}{\delta_{n,j}^{\Delta t}(\overline{\mathbf{X}}^{\Delta t}_n)}\right)^{\bar{P}_{n,j}}.
\end{align} 
To simplify the notation, we use the convention that  $\frac{a_j(\overline{\mathbf{X}}_{n}^{\Delta t})}{\delta_{n,j}^{\Delta t}(\overline{\mathbf{X}}^{\Delta t}_n)}=1$, whenever both $a_j(\overline{\mathbf{X}}_{n}^{\Delta t})=0$ and $\delta_{n,j}^{\Delta t}(\overline{\mathbf{X}}^{\Delta t}_n)=0$ in (\ref{eq:stepwiselh}). From (\ref{eq:addmissibleset}), this results in a factor of $1$ in the likelihood ratio for reactions where $a_j(\overline{\mathbf{X}}_{n}^{\Delta t})=0$.

Using the introduced change of measure \eqref{eq:measure_change}, the quantity of interest can be expressed with respect to the new measure:
\begin{equation}\label{eq:expis}
	\mathbb{E}[g(\hat{\mathbf{X}}^{\Delta t}_N)]=\mathbb{E}\left[L\left(\left(\bar{\mathbf{P}}_0,\dots,\bar{\mathbf{P}}_{N-1}\right),\left(\boldsymbol{\delta}_0^{\Delta t}(\overline{\mathbf{X}}^{\Delta t}_0),\dots,\boldsymbol{\delta}_{N-1}^{\Delta t}(\overline{\mathbf{X}}^{\Delta t}_{N-1})\right)\right)\cdot g(\overline{\mathbf{X}}^{\Delta t}_N)\right],
\end{equation}
with the expectation on the right-hand side  of \eqref{eq:expis} taken with respect to the dynamics in \eqref{eq:path_IS}. 

Next, we recall the connection between the optimal second moment minimizing IS parameters $\{\boldsymbol{\delta}_n^{\Delta t}(\mathbf{x})\}_{n=0,\dots,N-1; \mathbf{x}\in\mathbb{N}^d}$ and the corresponding discrete-time dynamic programming relation from \cite{ben2023learning}. We revisit the definition of the discrete-time value function $u_{\Delta t}(\cdot,\cdot)$ in Definition~\ref{def:optival2}, allowing the formulation of the dynamic programming equations in Theorem~\ref{theo:exact_optival}. The proof and further details for Theorem~\ref{theo:exact_optival} are provided in \cite{ben2023learning}.

\begin{definition}[Value function]\label{def:optival2}
	For a given $\Delta t>0$, the discrete-time \textit{value function} $u_{\Delta t}(\cdot,\cdot)$ is defined as the optimal (infimum) second moment for the proposed IS estimator. For time step $0 \le n \le N$ and state $\mathbf{x} \in \mathbb{N}^d$, 
	\begin{align}\label{eq:valuefct}
		u_{\Delta t}(n,\mathbf{x})
		&=\inf_{\{\boldsymbol{\delta}^{\Delta t}_k\}_{k=n,\dots,N-1} \in \mathcal{A}^{N-n}}\mathbb{E}\left[g^2\left(\overline{\mathbf{X}}_N^{\Delta t}\right)\prod_{k=n}^{N-1} L_k^2\left(\bar{\mathbf{P}}_k,\boldsymbol{\delta}_k^{\Delta t}(\overline{\mathbf{X}}_k^{\Delta t})\right)\middle| \overline{\mathbf{X}}_n^{\Delta t}=\mathbf{x}\right],
	\end{align}
	where $\mathcal{A}=\bigtimes_{\mathbf{x}\in\mathbb{N}^d}\bigtimes_{j=1}^J\mathcal{A}_{\mathbf{x},j}\in\mathbb{R}^{\mathbb{N}^d \times J}$ is the admissible set for the IS parameters, and $u_{\Delta t}(N,\mathbf{x})=g^2(\mathbf{x})$, for any $\mathbf{x} \in  \mathbb{N}^d$.
\end{definition}
\begin{theorem}[Dynamic programming for IS parameters \cite{ben2023learning}]\label{theo:exact_optival}
	For $\mathbf{x}\in \mathbb{N}^d$ and the given step size $\Delta t>0$, the discrete-time value function $u_{\Delta t}(n,\mathbf{x})$ fulfills the dynamic programming relation:
	\begin{small}
		\begin{align}\label{eq:exact_optival}
			u_{\Delta t}(N,\mathbf{x})&=g^2(\mathbf{x})\nonumber\\
			\text{and for } n&=N-1,\dots,0, \:  \text{and} \:  \mathcal{A}_\mathbf{x}:=\bigtimes_{j=1}^J\mathcal{A}_{\mathbf{x},j},\nonumber\\
			u_{\Delta t}(n,\mathbf{x})&=\inf_{\boldsymbol{\delta}_n^{\Delta t}(\mathbf{x})\in\mathcal{A}_\mathbf{x}}\exp\left(\left(-2\sum_{j=1}^J a_j(\mathbf{x})+\sum_{j=1}^J\delta_{n,j}^{\Delta t}(\mathbf{x})\right)\Delta t\right) \\
			&~~~~~~~~~~~~~\times \sum_{\mathbf{p} \in \mathbb{N}^J}\left(\prod_{j=1}^{J} \frac{(\Delta t \cdot \delta_{n,j}^{\Delta t}(\mathbf{x}))^{p_j}}{p_j!} (\frac{a_j(\mathbf{x})}{\delta_{n,j}^{\Delta t}(\mathbf{x})})^{2p_j} \right)\cdot u_{\Delta t}(n+1,\max(\mathbf{0},\mathbf{x}+ \boldsymbol{\nu}\mathbf{p})),\nonumber
		\end{align}
	\end{small}
	where $\boldsymbol{\nu}=\left(\boldsymbol{\nu}_1, \dots,\boldsymbol{\nu}_J\right)\in\mathbb{Z}^{d\times J}$.
\end{theorem}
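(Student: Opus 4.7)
The plan is to establish the recursion by the standard dynamic programming principle, using the Markov property of the controlled chain $\overline{\mathbf{X}}^{\Delta t}$ together with the multiplicative (stepwise) factorization of the likelihood ratio in \eqref{eq:likelihood}. The terminal condition $u_{\Delta t}(N,\mathbf{x})=g^2(\mathbf{x})$ is immediate from Definition~\ref{def:optival2} since the infimum is taken over an empty product. For $n<N$, I would start from the definition
\begin{equation*}
u_{\Delta t}(n,\mathbf{x})=\inf_{\{\boldsymbol{\delta}_k^{\Delta t}\}_{k=n}^{N-1}\in\mathcal{A}^{N-n}}\mathbb{E}\!\left[g^2(\overline{\mathbf{X}}_N^{\Delta t})\prod_{k=n}^{N-1}L_k^2(\bar{\mathbf{P}}_k,\boldsymbol{\delta}_k^{\Delta t}(\overline{\mathbf{X}}_k^{\Delta t}))\,\Big|\,\overline{\mathbf{X}}_n^{\Delta t}=\mathbf{x}\right],
\end{equation*}
pull out the factor $L_n^2$ associated with the first step (which depends only on $\bar{\mathbf{P}}_n$ and $\boldsymbol{\delta}_n^{\Delta t}(\mathbf{x})$), and condition on $\overline{\mathbf{X}}_{n+1}^{\Delta t}$ using the tower property. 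Because the future controls $\{\boldsymbol{\delta}_k^{\Delta t}\}_{k\ge n+1}$ and the future Poisson variables $\{\bar{\mathbf{P}}_k\}_{k\ge n+1}$ are independent of $\bar{\mathbf{P}}_n$ given $\overline{\mathbf{X}}_{n+1}^{\Delta t}$, the infimum over future controls recognizes the inner object as $u_{\Delta t}(n+1,\overline{\mathbf{X}}_{n+1}^{\Delta t})$, yielding the one-step Bellman form
\begin{equation*}
u_{\Delta t}(n,\mathbf{x})=\inf_{\boldsymbol{\delta}_n^{\Delta t}(\mathbf{x})\in\mathcal{A}_\mathbf{x}}\mathbb{E}\!\left[L_n^2(\bar{\mathbf{P}}_n,\boldsymbol{\delta}_n^{\Delta t}(\mathbf{x}))\,u_{\Delta t}(n+1,\overline{\mathbf{X}}_{n+1}^{\Delta t})\,\Big|\,\overline{\mathbf{X}}_n^{\Delta t}=\mathbf{x}\right].
\end{equation*}

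Next, I would evaluate the inner conditional expectation explicitly. Under the IS dynamics, given $\overline{\mathbf{X}}_n^{\Delta t}=\mathbf{x}$, the components of $\bar{\mathbf{P}}_n$ are independent with $\bar{P}_{n,j}\sim\mathrm{Poisson}(\delta_{n,j}^{\Delta t}(\mathbf{x})\Delta t)$, and $\overline{\mathbf{X}}_{n+1}^{\Delta t}=\max(\mathbf{0},\mathbf{x}+\boldsymbol{\nu}\bar{\mathbf{P}}_n)$. Substituting the closed form \eqref{eq:stepwiselh} of $L_n$ and writing the expectation as a sum over $\mathbf{p}\in\mathbb{N}^J$, the probability mass contributes $\prod_j e^{-\delta_{n,j}^{\Delta t}(\mathbf{x})\Delta t}(\delta_{n,j}^{\Delta t}(\mathbf{x})\Delta t)^{p_j}/p_j!$, while $L_n^2$ contributes $\exp(-2\sum_j(a_j(\mathbf{x})-\delta_{n,j}^{\Delta t}(\mathbf{x}))\Delta t)\prod_j(a_j(\mathbf{x})/\delta_{n,j}^{\Delta t}(\mathbf{x}))^{2p_j}$. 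Collecting the exponential factors gives $\exp((-2\sum_j a_j(\mathbf{x})+\sum_j\delta_{n,j}^{\Delta t}(\mathbf{x}))\Delta t)$, and combining the remaining terms produces exactly the summand in \eqref{eq:exact_optival}, completing the derivation modulo justifying the interchange of infimum and expectation.

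The step I expect to be the most delicate is precisely this measurable-selection argument for moving the infimum inside. The standard route is to observe that for any admissible feedback policy $\{\boldsymbol{\delta}_k^{\Delta t}\}_{k\ge n+1}$, the inequality $u_{\Delta t}(n,\mathbf{x})\le \mathbb{E}[L_n^2\cdot u_{\Delta t}(n+1,\overline{\mathbf{X}}_{n+1}^{\Delta t})\mid\overline{\mathbf{X}}_n^{\Delta t}=\mathbf{x}]$ follows by concatenation of controls, and the reverse inequality follows by taking an $\varepsilon$-optimal selector at each state $\mathbf{y}$ for $u_{\Delta t}(n+1,\mathbf{y})$; the countability of $\mathbb{N}^d$ here avoids the usual measurability difficulties typical of continuous state spaces. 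A second, more concrete care point is the admissible-set convention \eqref{eq:addmissibleset}: when $a_j(\mathbf{x})=0$ one has $\delta_{n,j}^{\Delta t}(\mathbf{x})=0$, $\bar{P}_{n,j}=0$ almost surely, and the $j$-th factor must be interpreted as $1$ (consistent with the convention $a_j/\delta_{n,j}=1$ introduced after \eqref{eq:stepwiselh}); this needs to be checked so that the $j$-th term drops out of both the exponential and the product, leaving \eqref{eq:exact_optival} unchanged. Finally, the $\max(\mathbf{0},\cdot)$ projection in \eqref{eq:path_IS} carries directly through the computation because it only appears inside the argument of $u_{\Delta t}(n+1,\cdot)$, matching the expression in the theorem verbatim.
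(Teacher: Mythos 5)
Your proposal is correct: the one-step Bellman decomposition via the multiplicative factorization of the likelihood ratio, the tower property, the explicit Poisson expectation (whose exponential factors indeed collect to $\exp\bigl(\bigl(-2\sum_j a_j(\mathbf{x})+\sum_j\delta_{n,j}^{\Delta t}(\mathbf{x})\bigr)\Delta t\bigr)$), and the remark that countability of $\mathbb{N}^d$ disposes of measurable selection are exactly the standard route, and your handling of the $a_j(\mathbf{x})=0$ convention and the $\max(\mathbf{0},\cdot)$ projection is consistent with the paper's setup. Note that this paper does not reprove the theorem but defers to \cite{ben2023learning}, where the argument is precisely this dynamic-programming derivation, so your attempt matches the intended proof.
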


Analytically solving the minimization problem \eqref{eq:exact_optival} is challenging due to the infinite sum. In \cite{ben2023learning}, the problem is solved by approximating the value function \eqref{eq:valuefct} using a truncated Taylor expansion of the dynamic programming \eqref{eq:exact_optival}. To overcome the curse of dimensionaliy, a learning-based approach for the value function was proposed. Instead, in this work, we utilize a continuous-time SOC formulation, leading to a set of coupled $d$-dimensional ODEs, the HJB equations (refer to Section~\ref{sec:HJB}). We deal with the curse of dimensionality issue by using a dimension reduction technique, namely the MP, as explained in Section~\ref{sec:MP}.

\subsection{Derivation of Hamilton--Jacobi--Bellman (HJB) Equations}\label{sec:HJB}

In Corollary~\ref{prop:HJB}, the discrete-time dynamic programming relation in Theorem~\ref{theo:exact_optival} is replaced by its analogous continuous-time relation, resulting in a set of ODEs known as the HJB equations. The continuous-time value function $\tilde{u}(\cdot,\mathbf{x}):[0,T]\rightarrow \mathbb{R}$,  $\mathbf{x}\in\mathbb{N}^d$, is the limit of the discrete value function $u_{\Delta t}(\cdot,\mathbf{x})$ as the step size $\Delta t$ approaches zero. In addition, the IS controls $\boldsymbol{\delta}(\cdot,\mathbf{x}): [0,T]\rightarrow \mathcal{A}_{\mathbf{x}}$ become time-continuous curves for $\mathbf{x}\in\mathbb{N}^d$.

\begin{corollary}[HJB equations for IS parameters]\label{prop:HJB}
	For all $\mathbf{x}\in\mathbb{N}^d$, the continuous-time value function $\tilde{u}(t, \mathbf{x})$ fulfills \eqref{eq:HJB} for $t\in [0,T]$: 
	
	\begin{align}\label{eq:HJB}
		\tilde{u}(T, \mathbf{x}) &=g^2(\mathbf{x}) \nonumber \\
		-\frac{d\tilde{u}}{dt} (t, \mathbf{x}) &=\inf _{\boldsymbol{\delta}(t,\mathbf{x}) \in \mathcal{A}_\mathbf{x}}\left(-2 \sum_{j=1}^J a_j(\mathbf{x})+\sum_{j=1}^J \delta_j(t,\mathbf{x})\right) \tilde{u}(t, \mathbf{x})+\sum_{j=1}^J \frac{a_j(\mathbf{x})^2}{\delta_j(t,\mathbf{x})} \tilde{u}\left(t, \max \left(0, \mathbf{x}+\nu_j\right)\right),
	\end{align}
		where $\delta_j(t,\mathbf{x}):=\left(\boldsymbol{\delta}(t,\mathbf{x})\right)_j$.
\end{corollary}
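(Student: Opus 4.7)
The plan is to derive the HJB equation as the formal $\Delta t \to 0$ limit of the discrete dynamic programming relation from Theorem~\ref{theo:exact_optival}. I would first simplify the inner sum. Using
\[
\frac{(\Delta t\,\delta_{n,j}^{\Delta t}(\mathbf{x}))^{p_j}}{p_j!}\left(\frac{a_j(\mathbf{x})}{\delta_{n,j}^{\Delta t}(\mathbf{x})}\right)^{2p_j}=\frac{1}{p_j!}\left(\Delta t\,\frac{a_j(\mathbf{x})^2}{\delta_{n,j}^{\Delta t}(\mathbf{x})}\right)^{p_j},
\]
the infinite sum in \eqref{eq:exact_optival} becomes the generating function of independent Poisson-type weights, which I would expand in powers of $\Delta t$ by isolating the multi-indices $\mathbf{p}=\mathbf{0}$ and $\mathbf{p}=\mathbf{e}_j$ (single-reaction firings) and bounding the tail $\sum_{|\mathbf{p}|\ge 2}$ by $O(\Delta t^2)$, assuming local boundedness of $u_{\Delta t}(n+1,\cdot)$ on the reachable lattice.

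Next, I would Taylor-expand the exponential prefactor as
\[
\exp\!\left(\!\left(-2\sum_{j}a_j(\mathbf{x})+\sum_{j}\delta_{n,j}^{\Delta t}(\mathbf{x})\right)\Delta t\right)=1+\Delta t\left(-2\sum_{j}a_j(\mathbf{x})+\sum_{j}\delta_{n,j}^{\Delta t}(\mathbf{x})\right)+O(\Delta t^2),
\]
and multiply it with the expansion of the sum. Collecting $O(1)$ and $O(\Delta t)$ contributions, the right-hand side of \eqref{eq:exact_optival} becomes
\[
u_{\Delta t}(n+1,\mathbf{x})+\Delta t\,\inf_{\boldsymbol{\delta}_n^{\Delta t}(\mathbf{x})\in\mathcal{A}_\mathbf{x}}\!\left\{\!\left(-2\sum_{j}a_j(\mathbf{x})+\sum_{j}\delta_{n,j}^{\Delta t}(\mathbf{x})\right)u_{\Delta t}(n+1,\mathbf{x})+\sum_{j}\frac{a_j(\mathbf{x})^2}{\delta_{n,j}^{\Delta t}(\mathbf{x})}\,u_{\Delta t}(n+1,\max(\mathbf{0},\mathbf{x}+\boldsymbol{\nu}_j))\right\}+O(\Delta t^2),
\]
since the $u_{\Delta t}(n+1,\mathbf{x})$ term at order $O(1)$ is independent of the control and can be pulled outside the infimum.

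Subtracting $u_{\Delta t}(n+1,\mathbf{x})$, dividing by $-\Delta t$, and identifying $t=n\Delta t$ with $\tilde{u}(t,\mathbf{x})=\lim_{\Delta t\to 0}u_{\Delta t}(n,\mathbf{x})$, the incremental ratio converges to $-d\tilde{u}/dt(t,\mathbf{x})$, producing exactly the HJB equation \eqref{eq:HJB}. The terminal condition $\tilde{u}(T,\mathbf{x})=g^2(\mathbf{x})$ transfers directly from $u_{\Delta t}(N,\mathbf{x})=g^2(\mathbf{x})$.

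The main obstacles I expect are: (i) justifying the termwise asymptotic expansion of the infinite $\mathbf{p}$-sum uniformly in admissible $\boldsymbol{\delta}$, which requires controlling $a_j(\mathbf{x})^2/\delta_{n,j}^{\Delta t}(\mathbf{x})$ away from both $0$ and $\infty$ (the admissible set $\mathcal{A}_{\mathbf{x},j}$ in \eqref{eq:addmissibleset} allows arbitrarily small or large positive controls, so formally one should restrict to a compact sub-range or argue that the minimizer stays bounded); and (ii) interchanging the infimum with the $\Delta t\to 0$ limit, which is the standard subtlety in passing from discrete dynamic programming to an HJB equation and can be handled by verifying that the candidate minimizer obtained from the continuous-time equation is an admissible asymptotic minimizer in the discrete setting. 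Given the ``corollary'' status of the statement, I expect these points to be treated formally here and rigorously elsewhere.
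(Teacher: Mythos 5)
Your proposal follows essentially the same route as the paper's proof in Appendix~\ref{Appendix:HJB}: Taylor-expand the exponential prefactor and the value function at the next time step, isolate the $\mathbf{p}=\mathbf{0}$ and single-firing terms of the Poisson sum (the tail being $\mathcal{O}(\Delta t^2)$), divide by $\Delta t$, and pass formally to the limit $\Delta t\to 0$. The paper treats the two subtleties you flag (uniformity in $\boldsymbol{\delta}$ and interchanging the infimum with the limit) just as formally as you do, so your argument matches its level of rigor.
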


\begin{proof}
	The proof of the corollary is presented in Appendix~\ref{Appendix:HJB}.
\end{proof}

If $\tilde{u}(t,\mathbf{x})>0$ for all $\mathbf{x}\in\mathbb{N}^d$ and $t\in[0,T]$, we can solve the minimization problem in \eqref{eq:HJB} in closed form, such that the optimal controls are given by
\begin{align}\label{eq:deltatilde}
	\tilde{\delta}_{j}(t,\mathbf{x})&= a_j(\mathbf{x})\sqrt{\frac{ \tilde{u}\left(t, \max \left(0, \mathbf{x}+\nu_j\right)\right)}{ \tilde{u}\left(t,\mathbf{x}\right)}}
\end{align}
and \eqref{eq:HJB} simplifies to
\begin{align}\label{eq:HJB2} 
	\frac{d\tilde{u}}{dt} (t, \mathbf{x}) &=-2\sum_{j=1}^J a_j(\mathbf{x})\left( \sqrt{\tilde{u}(t,\mathbf{x})\tilde{u}(t,\max(0,\mathbf{x}+\nu_j))}-\tilde{u}(t,\mathbf{x})\right).
\end{align}

To estimate rare event probabilities with an observable $g(\mathbf{x})=\mathbbm{1}_{\{x_i>\gamma\}}$, we encounter $\tilde{u}(t,\mathbf{x})=0$ for some $\mathbf{x}\in\mathbb{N}^d$; therefore, we modify \eqref{eq:HJB} by approximating the final condition $g(\mathbf{x})$ using a sigmoid: 
\begin{align}\label{eq:sigmoid}
	\tilde{g}(\mathbf{x})=\frac{1}{1+\exp(-b-\beta x_i)} >0
\end{align}
with appropriately chosen parameters $b\in\mathbb{R}$ and $\beta \in \mathbb{R}$. By incorporating the modified final condition, we obtain an approximate value function by solving \eqref{eq:HJB2} using an ODE solver (e.g. \texttt{ode23s} from MATLAB). When using the numerical solver, we truncate the infinite state space $\mathbb{N}^{\bar{d}}$ using sufficiently large upper bounds. The approximated near-optimal IS controls are then expressed by \eqref{eq:deltatilde}. By the truncation of the infinite state space and the approximation of the final condition $g$ by $\tilde{g}$, we introduce a bias to the value function. This can impact the amount of variance reduction in the IS-MC forward run, however, the IS-MC estimator is bias-free.

The cost for the ODE solver scales exponentially with the dimension $d$ of the SRNs, making this approach infeasible for high-dimensional SRNs. Section~\ref{sec:MP} presents a dimension reduction approach for SRNs employed in Section~\ref{sec:MP_IS} to derive suboptimal IS controls for a lower-dimensional SRN. We later demonstrate how these controls are mapped to the full-dimensional SRN system. 

\begin{remark}[Continuous-time IS controls]
	In Corollary~\ref{prop:HJB} and Theorem~\ref{theo:exact_optival}, we present two alternative methods to express the value function \eqref{eq:valuefct} and the IS controls. Utilizing the HJB framework, we can derive continuous controls across time. This allows any time stepping $\Delta t$ in the IS-TL forward run and eliminates the need for ad-hoc interpolations. 
\end{remark}

\section{Markovian Projection for Stochastic Reaction Networks}\label{sec:MP}
\subsection{Formulation}

To address the curse of dimensionality problem when deriving near-optimal IS controls, we project the SRN to a lower-dimensional network while preserving the marginal distribution of the original high-dimensional SRN system. We adapt the MP idea originally introduced in \cite{gyongy1986mimicking} for the setting of diffusion type stochastic differential equations to the SRNs framework. For an $d$-dimensional SRN state vector, $\mathbf{X}(t)$, we introduce a projection to a $\bar{d}$-dimensional space such that $1\le\bar{d}\ll d$:
 \begin{align*}
	P:\mathbb{R}^d\rightarrow \mathbb{R}^{\bar{d}}: \: \mathbf{x}\mapsto \mathbf{P} \mathbf{x},
\end{align*} 
where $\mathbf{P}\in\mathbb{R}^{\bar{d}\times d}$ is a given matrix. This section develops a general MP framework for arbitrary projections with $\bar{d}\geq 1$. However, the choice of the projection depends on the quantity of interest. In particular, when considering rare event probabilities with an observable $g(\mathbf{x})=\mathbbm{1}_{\{x_i>\gamma\}}, \gamma\in\mathbb{R}$ as we do in Section~\ref{sec:MP_IS}, the projection operator is of the form
\begin{align}\label{eq:projection}
	P(\mathbf{x})=\left(0,\dots,\underset{\color{gray}{i-1}}{0}, \underset{\color{gray}{i}}{1},\: \underset{\color{gray}{i+1}}{0},\dots,0  \right) \mathbf{x}.
\end{align}

In the derivation of the MP, we assume that 
\begin{equation}\label{eq:assumpnoninfty}
	\mathbb{E}[a_j(\mathbf{X}(t))\mid P(\mathbf{X}(t))=\mathbf{s},\mathbf{X}(0)=\mathbf{x}_0]<\infty
\end{equation}
for $\mathbf{s}\in\mathbb{N}^{\bar{d}}$, $t\in[0,T]$ and $1\leq j\leq J$.

 Theorem~\ref{theo:MP} shows that a  $\bar{d}$ dimensional SRN, $\bar{\boldsymbol{S}}(t)$ exists that follows the same conditional distribution as $\boldsymbol{S}(t):=P(\mathbf{X}(t))$  conditioned on the initial state $\mathbf{X}(0)=\mathbf{x}_0$ for all $t \in [0,T]$. 

\begin{theorem}[MP for SRNs]\label{theo:MP}
	We let $\bar{\boldsymbol{S}}(t)$ be a $\bar{d}$-dimensional stochastic process whose dynamics are given by
	\begin{align}\label{eq:proj}
		\bar{\boldsymbol{S}}(t)= P(\mathbf{x}_{0})+\sum_{j=1}^{J} \bar{Y}_j  \left(\int_0^t  \bar{a}_{j}(\tau,\bar{\boldsymbol{S}}(\tau)) d\tau \right) \underbrace{P(\boldsymbol{\nu}_j)}_{=:\bar{\boldsymbol{\nu}}_j},
	\end{align}
	for $t\in[0,T]$, where $\bar{Y}_j $ denotes independent unit-rate Poisson processes and $\bar{a}_{j}$, $j=1,\dots,J$, are characterized by
	
	\begin{align}\label{eq:abar}
		\bar{a}_j(t,\boldsymbol{s}):=\mathbb{E}\left[a_j(\mathbf{X}(t))\middle|P\left(\mathbf{X}(t)\right)=\boldsymbol{s}, \mathbf{X}(0)=\mathbf{x}_0 \right] \text{, for } 1\le j\le J, \boldsymbol{s}\in \mathbb{N}^{\bar{d}}.
	\end{align}
	Thus, $\boldsymbol{S}(t)\mid_{\{\mathbf{X}(0)=\mathbf{x}_0\}}=P(\mathbf{X}(t))\mid_{\{\mathbf{X}(0)=\mathbf{x}_0\}}$ and $\bar{\boldsymbol{S}}(t)\mid_{\{\mathbf{X}(0)=\mathbf{x}_0\}}$ have the same distribution for all $t\in[0,T]$.
\end{theorem}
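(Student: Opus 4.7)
The plan is to adapt the classical Gyöngy mimicking argument to the pure-jump setting by matching the (weak) Kolmogorov forward equations that govern the one-dimensional marginals of $\boldsymbol{S}(t)$ and of $\bar{\boldsymbol{S}}(t)$. The infinitesimal generator of the original SRN $\mathbf{X}$ applied to a bounded test function $\varphi:\mathbb{N}^{\bar d}\to\mathbb{R}$ composed with $P$ is
\begin{align*}
    \mathcal{L}[\varphi\circ P](\mathbf{x}) = \sum_{j=1}^J a_j(\mathbf{x})\bigl[\varphi(P(\mathbf{x})+\bar{\boldsymbol{\nu}}_j) - \varphi(P(\mathbf{x}))\bigr],
\end{align*}
since the jumps of $P(\mathbf{X})$ induced by reaction $j$ have size $P(\boldsymbol{\nu}_j)=\bar{\boldsymbol{\nu}}_j$. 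This is the starting point.

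\textbf{Step 1: Dynkin's formula for the projected process.} Applying Dynkin's formula to the martingale associated with $\varphi\circ P$ and taking expectation conditional on $\mathbf{X}(0)=\mathbf{x}_0$, I would obtain
\begin{align*}
    \mathbb{E}\bigl[\varphi(\boldsymbol{S}(t))\bigr] = \varphi(P(\mathbf{x}_0)) + \int_0^t \mathbb{E}\!\left[\sum_{j=1}^J a_j(\mathbf{X}(s))\bigl(\varphi(\boldsymbol{S}(s)+\bar{\boldsymbol{\nu}}_j)-\varphi(\boldsymbol{S}(s))\bigr)\right]\!ds.
\end{align*}
The key manipulation is to insert conditioning on $\boldsymbol{S}(s)=P(\mathbf{X}(s))$ inside the expectation; since $\varphi(\boldsymbol{S}(s)+\bar{\boldsymbol{\nu}}_j)-\varphi(\boldsymbol{S}(s))$ is $\sigma(\boldsymbol{S}(s))$-measurable, the tower property pulls it outside and leaves $\mathbb{E}[a_j(\mathbf{X}(s))\mid \boldsymbol{S}(s),\mathbf{X}(0)=\mathbf{x}_0] = \bar a_j(s,\boldsymbol{S}(s))$ by the very definition~\eqref{eq:abar}. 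This yields a closed weak forward equation for the law $\mu_t^S$ of $\boldsymbol{S}(t)$ driven by the projected propensities.

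\textbf{Step 2: Dynkin's formula for the mimicking SRN.} The process $\bar{\boldsymbol{S}}$ defined by~\eqref{eq:proj} is, by construction, a $\bar d$-dimensional Markov jump process whose time-inhomogeneous generator at time $s$ is exactly $\bar{\mathcal{L}}_s\varphi(\boldsymbol{s}) = \sum_j \bar a_j(s,\boldsymbol{s})(\varphi(\boldsymbol{s}+\bar{\boldsymbol{\nu}}_j)-\varphi(\boldsymbol{s}))$. Applying Dynkin again gives
\begin{align*}
    \mathbb{E}\bigl[\varphi(\bar{\boldsymbol{S}}(t))\bigr] = \varphi(P(\mathbf{x}_0)) + \int_0^t \mathbb{E}\!\left[\sum_{j=1}^J \bar a_j(s,\bar{\boldsymbol{S}}(s))\bigl(\varphi(\bar{\boldsymbol{S}}(s)+\bar{\boldsymbol{\nu}}_j)-\varphi(\bar{\boldsymbol{S}}(s))\bigr)\right]\!ds.
\end{align*}
Comparing with Step~1, both $\mu_t^S$ and the law $\bar\mu_t^S$ of $\bar{\boldsymbol{S}}(t)$ solve the same measure-valued integral equation with the same initial datum $\delta_{P(\mathbf{x}_0)}$.

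\textbf{Step 3: Uniqueness and conclusion.} I would conclude by invoking uniqueness of the solution to this forward equation, equivalently, well-posedness of the martingale problem for $\bar{\mathcal{L}}_s$. For a countable state space $\mathbb{N}^{\bar d}$ with bounded or at-most-linearly growing rates $\bar a_j$, this is standard (it follows, e.g., from Kolmogorov's construction of the minimal jump process together with non-explosion, or from the martingale-problem framework of Ethier--Kurtz). Since the equation has a unique solution, $\mu_t^S = \bar\mu_t^S$ for every $t\in[0,T]$, which is the claimed equality of distributions.

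\textbf{Expected main obstacle.} The technical heart of the argument is justifying uniqueness of the weak forward equation: the projected rates $\bar a_j(t,\cdot)$ inherit measurability from conditional expectations but may be unbounded in $\boldsymbol{s}$ and depend on $t$ through the law of $\mathbf{X}$, so one must verify a non-explosion or moment bound for $\bar{\boldsymbol{S}}$ (most naturally by comparison with $P(\mathbf{X})$ itself, whose non-explosion is assumed in the SRN setting of Section~\ref{sec:SRNs}). A secondary subtlety is the positivity projection $\max(\mathbf{0},\cdot)$: here it does not appear because the exact process $\mathbf{X}$ remains in $\mathbb{N}^d$ by the indicator in~\eqref{eq:prop_dynamics}, so $\boldsymbol{S}=P(\mathbf{X})$ never takes the forbidden jumps, and this property transfers to $\bar{\boldsymbol{S}}$ through the matched generator. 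Apart from these points, the argument is a direct generator-matching computation.
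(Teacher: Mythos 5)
Your proposal is correct in outline, but it proves the theorem by a genuinely different route than the paper. You work with the \emph{forward} Kolmogorov (Fokker--Planck/master) equation: you show that the marginal laws of $\boldsymbol{S}(t)=P(\mathbf{X}(t))$ and of $\bar{\boldsymbol{S}}(t)$ both solve the same measure-valued evolution equation driven by $\bar{\mathcal{L}}_s$, and then you conclude by uniqueness of that linear equation (well-posedness of the time-inhomogeneous martingale problem). The paper instead runs a \emph{backward}/duality argument in the style of weak-error analysis: it fixes a bounded test function $f$, introduces the cost-to-go function $\bar{v}(t,\boldsymbol{s})=\mathbb{E}[f(\bar{\boldsymbol{S}}(T))\mid\bar{\boldsymbol{S}}(t)=\boldsymbol{s}]$, applies Dynkin's formula to $\bar{v}(\tau,P(\mathbf{X}(\tau)))$ along the original process, cancels the time derivative using the Kolmogorov backward equation for $\bar{v}$, and shows the residual integrand vanishes term by term once $\bar{a}_j$ and $\bar{\boldsymbol{\nu}}_j$ are chosen as in \eqref{eq:abar} and \eqref{eq:proj}. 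The central mechanism is identical in both proofs --- the tower property conditioned on $P(\mathbf{X}(\tau))$, which works because the increment of the test function depends on $\mathbf{X}(\tau)$ only through its projection --- but the technical burden lands in different places: your route requires uniqueness for the forward equation (hence a non-explosion/moment argument for $\bar{\boldsymbol{S}}$, which you correctly flag as the main obstacle and which is not entirely immediate since $\bar{\boldsymbol{S}}$ is a different process from $P(\mathbf{X})$), whereas the paper's route requires existence and enough regularity of the backward solution $\bar{v}$ to justify Dynkin's formula and the interchange of expectation and time integral. Neither document fully discharges its respective technical hypotheses, so your argument is at a comparable level of rigor; your version has the advantage of being closer to the classical Gy\"ongy mimicking template, while the paper's duality formulation gives an explicit error representation \eqref{eq:epst} from which the \emph{necessity} of the choices \eqref{eq:abar} and $\bar{\boldsymbol{\nu}}_j=P(\boldsymbol{\nu}_j)$ (to kill each term for arbitrary $f$) is read off directly.
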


\begin{proof}
	The proof for Theorem~\ref{theo:MP} is given in Appendix~\ref{Appendix:MP}.
\end{proof}
In Theorem \ref{theo:MP}, we require the assumption in \eqref{eq:assumpnoninfty} to hold in order to ensure that the MP propensity in \eqref{eq:abar} is well-defined. Assumption  \eqref{eq:assumpnoninfty} does not hold for all SRNs. However, in Remark \ref{rem:ass}, we present sufficient conditions guaranteeing \eqref{eq:assumpnoninfty}.

\begin{remark}[Sufficient conditions for assumption \eqref{eq:assumpnoninfty}]\label{rem:ass}
	If the propensity functions $\{a_j(\cdot)\}_{j=1}^J$ are bounded, i.e., it exist bounds $K_j\in\mathbb{R}^+$ such that for any $\mathbf{x}\in\mathbb{N}^d$ $a_j(\mathbf{x})<K_j$ for $1\leq j\leq J$. Then, the expectation in \eqref{eq:assumpnoninfty} is bounded by the same bounds. An alternative condition is that the set
	\begin{align*}
		\mathcal{K}_{t,\mathbf{s}}:=\left\{ \mathbf{y}\in\mathbb{N}^d: P(\mathbf{y})=\mathbf{s} \quad \text{and} \quad \mathbb{P}(\mathbf{X}(t)=\mathbf{y})>0\right\}
	\end{align*}
	 is finite for all $\mathbf{s}\in\mathbb{N}^{\bar{d}}$ and $t\in[0,T]$. Consequently, the expectation in \eqref{eq:assumpnoninfty} corresponds to a finite sum and is finite.
	 
	 An in-depth analysis is required to establish sharp conditions under which \eqref{eq:assumpnoninfty}  holds. Our numerical analysis show that the MP propensity is well defined in many examples.
\end{remark}

The mimicking process $\{\bar{\boldsymbol{S}}(t)\}_{t\in[0,T]}$ is a SRN and consequently Markovian, whereas the projected full-dimensional process $\{\boldsymbol{S}(t)\}_{t\in[0,T]}$ is non-Markovian.
  	Moreover, the propensities of the full-dimensional process $\{a_j\}_{j=1}^J$ are time-homogeneous functions of the state, whereas the resulting propensities, $\{\bar{a}_j\}_{j=1}^J$ of the MP-SRN $\bar{\boldsymbol{S}}$ are time-dependent (see \eqref{eq:abar}).
Reactions with $P(\boldsymbol{\nu}_j)=0$ do not contribute to the MP propensity in \eqref{eq:abar}. For reactions with $P(\boldsymbol{\nu}_j) \neq 0$, it may occur that their corresponding projected propensity is known analytically. We denote the index set of reactions requiring an estimation of \eqref{eq:abar} (e.g., via a $L^2$ regression as described in Section~\ref{sec:L2}) by $\mathcal{J}_{MP}$. This index set is described as follows:

\begin{align}\label{eq:JMP}
	\mathcal{J}_{MP}:=\left\{1\leq j \leq J: \quad P(\boldsymbol{\nu}_j)\neq0 \quad \text{and}\quad \underbrace{a_j(\mathbf{x})\neq f(P(\mathbf{x}))\text{ for all functions } f:\mathbb{R}^{\bar{d}}\rightarrow \mathbb{R} }_{(*)} \right\},
\end{align}
where condition (*) excludes reaction channels for which the MP propensity is only dependent on $s$ and given in closed form by $\bar{a}_j(t,s)=f(s)$ for the function $f$.

\subsection{Discrete $L^2$ Regression for Approximating Projected Propensities}\label{sec:L2}

To approximate the Markovian propensity $\bar{a}_j$ for $j\in \mathcal{J}_{MP}$, we reformulate \eqref{eq:abar} as a minimization problem and then use discrete $L^2$ regression as described below.

We let $V:=\left\{f:[0,T]\times \mathbb{R}^{\bar{d}}\rightarrow \mathbb{R}: \int_0^T\mathbb{E}[f(t,P(\mathbf{X}(t)))^2]dt<\infty\right\}$. Then, the projected propensities via the MP for $j\in \mathcal{J}_{MP}$ are approximated by
\begin{align}\label{eq:V_t2}
	\bar{a}_j(\cdot,\cdot)&=\text{argmin}_{h\in V}\int_0^T\mathbb{E}\left[\left( a_j(\mathbf{X}(t))-h(t,P(\mathbf{X}(t)))\right)^2\right]dt\nonumber \\
	&\approx\text{argmin}_{h\in V}\mathbb{E}\left[\frac{1}{{N}}\sum_{{n=0}}^{N-1}\left( a_j(\hat{\mathbf{X}}^{\Delta t}_n)-h(t_n,P(\hat{\mathbf{X}}^{\Delta t}_n))\right)^2\right]\nonumber \\
	&\approx\text{argmin}_{h\in V} \frac{1}{M} \sum_{m=1}^M\frac{1}{{N}}\sum_{{n=0}}^{N-1}\left( a_j(\hat{\mathbf{X}}^{\Delta t}_{[m],n})-h(t_n,P(\hat{\mathbf{X}}^{\Delta t}_{[m],n}))\right)^2  ,
\end{align}
where $\left\{\hat{\mathbf{X}}^{\Delta t}_{[m]}\right\}_{m=1}^M$ are $M$ independent TL paths with a uniform time grid $0=t_0<t_1<\dots<t_N=T$ with step size $\Delta t$.

To solve \eqref{eq:V_t2}, we use a discrete $L^2$ regression approach. For the case $\bar{d}=1$, we employ a set of basis functions in $V$, $\{\phi_p(\cdot,\cdot)\}_{p\in\Lambda}$, where $\Lambda\subset \mathbb{N}^{2}$ is a finite index set. In Remark~\ref{rem:orth_basis}, we provide more details on the choice of the basis. Consequently, the projected propensities via MP are approximated by
\begin{align}\label{eq:bara}
	\bar{a}_j(t,s)\approx \sum_{p\in\Lambda}c_p^{(j)}{\phi}_p(t,s), j \in \mathcal{J}_{MP}
\end{align}
where the coefficients $c_p^{(j)}$ must be derived for $j\in \mathcal{J}_{MP}$ and $p\in\Lambda$.

Next, we derive the linear systems of equations, solved by $\{c_p^{(j)}\}_{p\in\Lambda}$ from \eqref{eq:bara} for $j\in \mathcal{J}_{MP}$.
For a given one-dimensional indexing of $\{1,\dots,M\} \times \{0,\dots,N-1\}$, the corresponding design matrix $\boldsymbol{D}\in\mathbb{R}^{MN\times|\Lambda|}$ is given by
\begin{align*}
	D_{k,p}={\phi}_p(t_n,P(\hat{\mathbf{X}}^{\Delta t}_{[m],n})), \text{ for } k=(m,n)\in\{1,\dots,M\} \times \{0,\dots,N-1\}, \quad p\in \Lambda.
\end{align*}
Further, we set  $\psi_{k}^{(j)}=a_j(\hat{\mathbf{X}}^{\Delta t}_{[m],n})$ ($\boldsymbol{\psi}^{(j)}\in \mathbb{R}^{MN}$) for $k\in\{1,\dots,M\} \times \{0,\dots,N-1\}$, and $j\in \mathcal{J}_{MP}$.

Then, the minimization problem in \eqref{eq:V_t2} becomes
\begin{align*}
	\textbf{c}^{(j)}& =\text{argmin}_{\{c_{p}\}_{p\in\Lambda}} \frac{1}{MN} \sum_{m=1}^M\sum_{n=0}^{N-1}\left( a_j(\hat{\mathbf{X}}^{\Delta t}_{[m],n})-\sum_{p\in\Lambda} c_{p} {\phi}\left(t_{n},P(\hat{\mathbf{X}}^{\Delta t}_{[m],n})\right)\right)^2\\
	&=\text{argmin}_{\mathbf{c}\in \mathbb{R}^{\#\Lambda}} \left(\boldsymbol{\psi}^{(j)}-\boldsymbol{D}\textbf{c} \right)^\top\left(\boldsymbol{\psi}^{(j)}-\boldsymbol{D} \textbf{c} \right)\\
	&= \text{argmin}_{\mathbf{c}\in  \mathbb{R}^{\#\Lambda}}\underbrace{{ \boldsymbol{\psi}^{(j)}}^\top\boldsymbol{\psi}^{(j)}-2\mathbf{c}^\top \boldsymbol{D}^\top \boldsymbol{\psi}^{(j)} +\mathbf{c}^\top \boldsymbol{D}^\top \boldsymbol{D}\mathbf{c}}_{=:I(\mathbf{c})}.
\end{align*}

We minimize $I(\mathbf{c})$ with respect to $\mathbf{c}$ by solving
\begin{align*}
	\frac{\partial I(\mathbf{c})}{\partial \mathbf{c}} = -2{\boldsymbol{D} }^\top \boldsymbol{\psi}^{(j)} +2{\boldsymbol{D} }^\top \boldsymbol{\boldsymbol{D} } \mathbf{c}=0
\end{align*}
and obtain the normal equation for $j\in \mathcal{J}_{MP}$:
\begin{align}\label{eq:normal}
	\left(\boldsymbol{D} ^\top \boldsymbol{D} \right)\mathbf{c}^{(j)}=\boldsymbol{D} ^\top\boldsymbol{\psi}^{(j)}.
\end{align}

\begin{remark}[Orthonormal basis approach via empirical inner product]\label{rem:orth_basis}
For the case $\bar{d}=1$, the normal equation with a set of polynomials $\{\phi_p\}_{p\in\Lambda}$ in $\mathbb{R}^2$ can be used to derive the MP propensity $\bar{a}_j$ for $ j\in \mathcal{J}_{MP}$. We use the standard basis $\left\{\phi_{(i_1,i_2)}\right\}_{(i_1,i_2)\in\Lambda}$ for a two-dimensional index set $\Lambda$, where
	$$
	\phi_{(i_1,i_2)}: \mathbb{R}^{2} \rightarrow \mathbb{R}, (t,x) \mapsto t^{i_1}x^{i_2}.
	$$
	
	For better stability \cite{cohen2013stability}, we use the Gram--Schmidt orthogonalization algorithm to determine an orthonormal set of functions for the empirical scalar product: 
	\begin{align}\label{eq:skalar}
		\left\langle\phi_{i}, \phi_{j}\right\rangle_{{M}}=\frac{1}{N} \sum_{n=0}^{N-1} \frac{1}{{M}} \sum_{m=1}^{{M}} \phi_{i}\left(t_n, P \hat{\mathbf{X}}^{\Delta t}_{[m],n}\right) \phi_{j}\left(t_{n},P \hat{\mathbf{X}}^{\Delta t}_{[m],n}\right)
	\end{align}
	to find an orthonormal set of functions.
	We base the empirical scalar product and the normal equation \eqref{eq:normal} on the same set of TL paths, $\{\hat{\mathbf{X}}^{\Delta t}_{[m]}\}_{m=1,\dots,M}$, such that the matrix condition number becomes $\text{cond}(\boldsymbol{D} ^\top \boldsymbol{D})=1$ and $\boldsymbol{D}^\top \boldsymbol{D}=\text{diag}\left(\frac{T}{\Delta t}M,\dots,\frac{T}{\Delta t}M\right)$ \cite{cohen2013stability}.
\end{remark}

	\begin{remark}[Propensity functions]\label{rem:prop}
 The proof of Theorem \eqref{theo:MP} and the derivation of the normal equation \eqref{eq:normal} is general with respect to the structure of the propensity function. The Markovian projection can be applied to SRNs with arbitrary propensity functions, i.e., not restricted to the mass-action-kinetics principle introduced in \eqref{eq:prop_dynamics} (e.g. Hill-type reaction rate law \cite{weiss1997hill}). The choice of suitable basis functions $\{\phi_p\}_{p\in\Lambda}$ for the $L^2$ regression depends on the given example and also on the type of propensity.
	\end{remark}

\subsection{Computational Cost of Markovian Projection}\label{sec:comp_cost_MP}

The computational work to derive an MP for an SRN with $J$ reactions based on a time stepping $\Delta t$ based on $M$ TL paths and an orthonormal set of polynomials (see Remark~\ref{rem:orth_basis}) of size $\#\Lambda$ splits into three types of costs:
\begin{align}\label{eq:MPcost}
	W_{MP}(\#\Lambda,\Delta t,M)\approx M\cdot W_{TL}(\Delta t) + W_{G-S}(\#\Lambda,\Delta t,M)+W_{L^2}(\#\Lambda,\Delta t,M),
\end{align}
where $W_{TL}$, $W_{G-S}$, and $W_{L^2}$ denote the computational costs to simulate a TL path, derive an orthonormal basis (as described in Remark~\ref{rem:orth_basis}), and derive and solve the normal equation in \eqref{eq:normal}, respectively. The dominant terms of these costs contribute as follows:
\begin{align*}
	W_{TL}(\Delta t) & \approx \frac{T}{\Delta t} \cdot J \cdot C_{Poi},\\
	W_{G-S}(\#\Lambda,\Delta t,M)& \approx M \cdot \frac{T}{\Delta t}\cdot  \left(\#\Lambda\right)^3,\\
	W_{L^2}(\#\Lambda,\Delta t,M)& \approx M\cdot \frac{T}{\Delta t}\cdot \left(\left(\#\Lambda\right)^2+\#\mathcal{J}_{MP}\cdot\#\Lambda\right),
\end{align*}
where $C_{Poi}$ represents the cost to simulate one realization of a Poisson random variable. The main computational cost results from deriving an orthonormal basis (see Remark~\ref{rem:orth_basis}). A more detailed derivation of the cost terms is provided in Appendix~\ref{appendix:cost}. For many applications, such as the MP-IS approach presented in Section~\ref{sec:MP_IS}, the MP must be computed only once, such that the computational cost $W_{MP}(\#\Lambda,\Delta t,M)$ can be regarded as an off-line cost.

	\begin{remark}[Simulating MP paths]\label{rem:simMP}
		The MP-SRN $\bar{\mathbf{S}}(t)$ can be simulated as a SRN with inhomogeneous propensity function. The explicit TL scheme in \eqref{eq:TL_approx} can be naturally adapted to this setting. However, we note that the IS approach presented in Section \ref{sec:MP_IS} does not require explicitly simulating paths of the MP-SRN. 
\end{remark}

\section{Importance Sampling for Higher-dimensional Stochastic Reaction Networks via Markovian Projection}\label{sec:MP_IS}
Next, we employ MP to overcome the curse of dimensionality when deriving IS controls from solving \eqref{eq:HJB}. Specifically, we solve the HJB equations in \eqref{eq:HJB2} for a reduced-dimensional MP system (refer to Figure \ref{fig:schematic} for a schematic illustration of the approach). Given a suitable projection $P:\mathbb{R}^d\rightarrow \mathbb{R}^{\bar{d}}$ and a corresponding final condition $\tilde{g}:\mathbb{N}^{\bar{d}}\rightarrow \mathbb{R}$ with $\tilde{g}(P(\mathbf{x}))=g(\mathbf{x})$, the HJB equations \eqref{eq:HJB2} for the MP process are

\begin{align}\label{eq:HJBdbar}
	\tilde{u}_{\bar{d}}(T, \boldsymbol{s}) &=\tilde{g}^2(\boldsymbol{s}), \quad  \boldsymbol{s} \in \mathbb{N}^{\bar{d}} \nonumber \\
	\frac{d\tilde{u}_{\bar{d}}}{dt}(t, \boldsymbol{s}) &=-2\sum_{j=1}^J \bar{a}_j(t,\boldsymbol{s})\left( \sqrt{\tilde{u}_{\bar{d}}(t,\boldsymbol{s})\tilde{u}_{\bar{d}}(t,\max(0,\boldsymbol{s}+\bar{\boldsymbol{\nu}}_j))}-\tilde{u}_{\bar{d}}(t,\boldsymbol{s})\right), \quad t\in[0,T], \boldsymbol{s} \in \mathbb{N}^{\bar{d}}.
\end{align}

For observables of the type $g(\mathbf{x})=\mathbbm{1}_{\{x_i>\gamma\}}$, we use an  MP to a ($\bar{d}=1$)-dimensional process via projection \eqref{eq:projection}, and the final condition is approximated by a positive sigmoid (see \eqref{eq:sigmoid}). 
The solution of \eqref{eq:HJBdbar} is the value function $\tilde{u}_{\bar{d}}$ of the $\bar{d}$-dimensional MP process. 
To obtain continuous-time IS controls for the $d$-dimensional SRN, we substitute the value function $\tilde{u}\left(t,\mathbf{x}\right)$ of the full-dimensional process in \eqref{eq:deltatilde} with the value function $\tilde{u}_{\bar{d}}(t,P(\mathbf{x}))$ of the MP-SRN: 
\begin{align}\label{eq:blue_approach}
	\bar{\delta}_j(t,\mathbf{x})={a_j(\mathbf{x})}\sqrt{\frac{ \tilde{u}_{\bar{d}}\left(t, \max \left(0, P(\mathbf{x}+\boldsymbol{\nu}_j)\right)\right)}{ \tilde{u}_{\bar{d}}\left(t,P(\mathbf{x})\right)}} \text{ for } \mathbf{x}\in\mathbb{N}^d, t\in[0,T].
\end{align}

\begin{remark}[Alternative MP-IS approach]
	In the presented approach, we map the value function of the $\bar{d}$-dimensional MP process to the full-dimensional SRNs. Alternatively, one could also map the optimal controls from the $\bar{d}$-dimensional MP-SRN to the full-dimensional SRNs, leading to the following controls: 
	\begin{align}\label{eq:red_approach}
		\tilde{\delta}^{\bar{d}}_j(t,\mathbf{x})=\bar{a}_j(t,P(\mathbf{x}))\sqrt{\frac{ \tilde{u}_{\bar{d}}\left(t, \max \left(0, P(\mathbf{x}+\boldsymbol{\nu}_j)\right)\right)}{ \tilde{u}_{\bar{d}}\left(t,P(\mathbf{x})\right)}}, \text{ for } \mathbf{x}\in\mathbb{N}^d, t\in[0,T].
	\end{align}
	The numerical experiments demonstrate that this approach results in a comparable variance reduction to the approach presented in \eqref{eq:blue_approach}.
\end{remark}

\begin{remark}[Adaptive MP for $\bar{d}>1$]
 In \eqref{eq:blue_approach}, when utilizing $\tilde{u}_{\bar{d}}$ as the value function for the $d$-dimensional control, we introduce a bias to the optimal IS controls by approximating $\tilde{u}(t,\mathbf{x})$ by $\tilde{u}_{\bar{d}}(t,P(\mathbf{x}))$ for $\mathbf{x}\in\mathbb{N}^d$ and $t\in[0,T]$. For the case $\bar{d}=d$, we have $\tilde{u}_{\bar{d}}(t,P(\mathbf{x}))=\tilde{u}(t,\mathbf{x})$ and the MP produces the optimal IS control for the full-dimensional SRNs. For  $\bar{d}<d$, this equality does not hold, since the interaction (correlation effects) between non-projected species are not taken into account in the MP SRNs, because the MP only ensures that the marginal distributions of $P(\mathbf{X}(t))\mid_{\{\mathbf{X}(0)=\mathbf{x}_0\}}$ and $\bar{\boldsymbol{S}}(t)\mid_{\{\mathbf{X}(0)=\mathbf{x}_0\}}$ are identical. This can be seen in examples in which reactions occur with $P(\boldsymbol{\nu}_j)=0$. Those reactions, are not present in the MP and; thus, are not included in the IS scheme. For the extreme case, $\bar{d}=1$, we expect to achieve the least variance reduction which could be already substantial and satisfactory for many examples as we show in our numerical experiments.
	
	However, examples could exist where a projection to dimension $\bar{d}=1$ is insufficient to achieve a desired variance reduction. In this case, we can adaptively choose a better projection with  increased dimension $\bar{d}=1,2,\dots$ until a sufficient variance reduction is achieved. This will imply an increased computational cost in the MP and in solving the HJB equations \eqref{eq:HJBdbar} for $\mathbf{x}\in\mathbb{N}^{\bar{d}}$. Investigating the effect of $\bar{d}$ on improving the variance reduction of our approach is left for a future work.
\end{remark}

To derive an MP-IS-MC estimator for a given uniform time grid $0=t_0\leq t_1\leq \dots \leq t_N=T$ with step size $\Delta t$, we generate IS paths using the scheme in \eqref{eq:path_IS} with IS control parameters $\delta_{n,j}^{\Delta t}(\mathbf{x})=\bar{\delta}_j(t_n,\mathbf{x})$, as in \eqref{eq:blue_approach}, for $j=1,\dots,J, \mathbf{x}\in\mathbb{R}^d, n=0,\dots,N-1$. Figure~\ref{fig:scheme} presents a schematic illustration of the entire derivation of the MP-IS-MC estimator.

This computational work consists of three cost contributions:
\begin{align}
	W_{MP-IS-MC}&(\#\Lambda,\Delta t,M,M_{fw})\\
	&\approx W_{MP}(\#\Lambda,\Delta t,M)+W_{HJB}(\#\Lambda)+W_{forward}(\Delta t, M_{fw}),\nonumber
\end{align}

where $W_{MP}(\#\Lambda,\Delta t,M)$ denotes the off-line cost to derive the MP (see \eqref{eq:MPcost}), $W_{HJB}(\#\Lambda)$ represents the cost to solve the HJB \eqref{eq:HJBdbar} for the $\bar{d}$-dimensional MP-SRN, and  $W_{forward}(\Delta t, M_{fw})$ indicates the cost of deriving $M_{fw}$ IS paths. The cost to solve the HJB \eqref{eq:HJBdbar} $W_{HJB}(\#\Lambda)$ depends on the used solver, and the cost for the forward run has the following dominant terms:
\begin{align}
	W_{forward}(\Delta t, M_{fw})\approx M_{fw}\cdot\frac{T}{\Delta t}\cdot (J \cdot C_{Poi}+C_{lik}+\#\mathcal{J}_{MP}\cdot C_{\delta}),
\end{align}
where $C_{\delta}$ is the cost to evaluate \eqref{eq:blue_approach}.

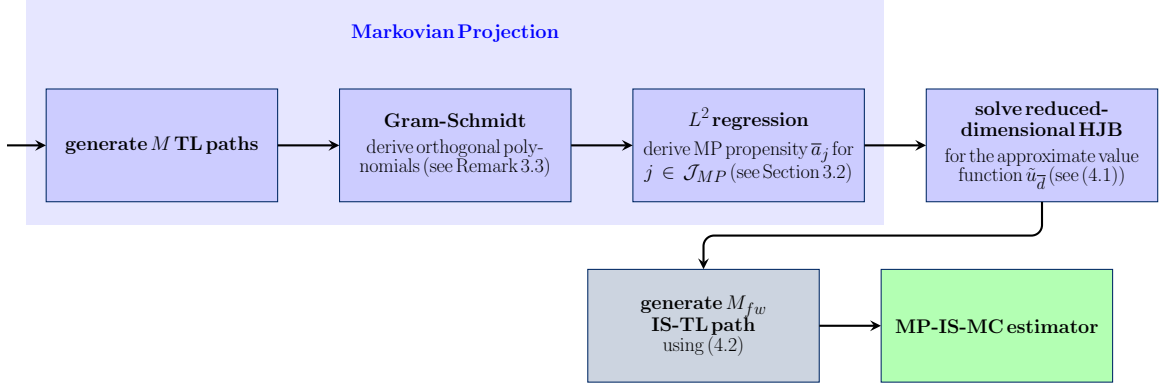
\begin{figure}[h]
	\caption{Schematic diagram MP-IS-MC. The costs of the operations in the first line (blue boxes) are off-line. \label{fig:scheme}}
	\begin{NoHyper}
	\begin{tikzpicture}[scale=0.3, transform shape]
			\node(MPbox)[groupingblue]{};
		\node(start2)[text centered, left of =MPbox, xshift=-19cm, yshift=-1.5cm] {};
		\node (start1) [box,right of =start2, xshift=6cm, yshift=0cm] {\Huge \textbf{generate $M$ TL paths}};
	
		\node (start) [box, right of=start1,xshift=12cm] {{\Huge \textbf{Gram-Schmidt}}\\
			\vspace{0.5cm}
			\Huge derive orthogonal polynomials (see Remark~\ref{rem:orth_basis})};
		\node (MPa) [box, right of=start,xshift=12cm] {\Huge \textbf{$L^2$ regression}\\ \vspace{0.5cm}\Huge derive MP propensity $\bar{a}_j$ for $j\in \mathcal{J}_{MP}$ (see Section~\ref{sec:L2})};
		\node (u) [box, right of=MPa,xshift=12cm] {{\Huge \textbf{solve reduced-}\\ \vspace{0.3cm} \textbf{dimensional HJB}}\\ \vspace{0.5cm}\Huge for the approximate value function $\tilde{u}_{\bar{d}}$ (see \eqref{eq:HJBdbar})};
		\node(blue1) [box, below of=u,yshift=-7cm, xshift=-15cm, draw=helmholtzdarkblue, fill=helmholtzdarkblue!20] {{\Huge \textbf{generate $M_{fw}$ IS-TL path}}\\ \Huge using \eqref{eq:blue_approach}};
	
		\node(MC)[box,right of=blue1, xshift = 12cm, fill=green!30] {\Huge \textbf{MP-IS-MC estimator}};
		\node(MPlabel)[text centered, above of=start, yshift=4cm,text width=10cm]  {\Huge \color{blue}\textbf{Markovian Projection} };
		
		\draw [arrow] (start2) -- (start1);
		\draw [arrow] (start1) -- (start);
		\draw [arrow] (start) -- (MPa);
		\draw [arrow] (MPa) -- (u);
		
		\draw[rounded corners, arrow] 
		(u.south) |- ($ (MC)!0.5!(MPa) $)
		-| (blue1.north);
		
		\draw [arrow] (blue1) -- (MC);
	\end{tikzpicture}
\end{NoHyper}
\end{figure}

\begin{remark}[Further applications of MP in the context of SRNs]
	In this work, we use the described MP for dimension reduction to derive a sub-optimal change of measure for IS, but the same MP framework can be used for other applications, such as solving the chemical master equation \cite{mikeev2019approximate} or the Kolmogorov backward equations \cite{moraes2015simulation}. We intend to explore these directions in a future work.
\end{remark}

\section{Numerical Experiments and Results}\label{sec:numexp}
Through Examples~\ref{exp:mm} and \ref{exp:Goutsias}, we demonstrate the advantages of the proposed MP-IS approach compared with the standard MC approach for rare event estimations. We numerically demonstrate that the proposed approach achieves a substantial variance reduction compared with standard MC estimators when applied to SRNs with various dimensions.

	\begin{example}[Michaelis--Menten enzyme kinetics \cite{rao2003stochastic}] 
		\label{exp:mm}
		The Michaelis-Menten enzyme kinetics are enzyme-catalyzed reactions describing the interaction of an enzyme $E$  with a substrate $S$, resulting in a product $P$:
		\begin{align*}
			E+S\overset{\theta_1}{\rightarrow} C, ~~C\overset{\theta_2}{\rightarrow} E+S,~~
			C\overset{\theta_3}{\rightarrow} E+P,
		\end{align*}
		where $\theta = (0.001,0.005,0.01)^\top$. 
		We consider the initial state $\mathbf{X}_0=(E(0),S(0),C(0),P(0))^\top=(100, 100, 0, 0)^\top$ and the final time $T=1$. The corresponding propensity and the stoichiometric matrix are given by
		\begin{align*}
			a(\textbf{x})=\left(\begin{array}{c}
				\theta_{1} E S \\
				\theta_{2} C \\
				\theta_{3} C
			\end{array}\right), \quad
			\boldsymbol{\nu}=\left(\begin{array}{ccc}
				-1 & 1 & 1  \\
				-1 & 1 & 0  \\
				1 & -1& -1 \\
				0& 0& 1
			\end{array}\right).
		\end{align*}
		The observable of interest is $g(\mathbf{x})=\mathbf{1}_{\{x_3>22\}}$.
	\end{example}

\begin{example}[Goutsias's model of regulated transcription \cite{goutsias2005quasiequilibrium,kang_separation_2013}]
	\label{exp:Goutsias}
	
	The model describes a transcription regulation through the following six molecules:\\
	\begin{tabular}{l l l l} 
		Protein monomer &($M$),&Transcription factor& ($D$),\\
		mRNA &($RNA$),&Unbound DNA &($DNA$),\\
		DNA bound at one site &($DNA\cdot D$),&DNA bounded at two sites &($DNA\cdot 2D$).
	\end{tabular}
	
	These species interact through the following 10 reaction channels
	
	\begin{tabular}{r l r l} 
		R N A & $\overset{\theta_1}{\rightarrow} R N A+M,$ &M & $\overset{\theta_2}{\rightarrow} \varnothing,$ \\
		D N A $\cdot$ D &$\overset{\theta_3}{\rightarrow} R N A+D N A \cdot D, $&
		R N A & $\overset{\theta_4}{\rightarrow} \varnothing,$ \\
		D N A+D & $\overset{\theta_5}{\rightarrow} D N A \cdot D,$ &
		D N A $\cdot$ D &$\overset{\theta_6}{\rightarrow}D N A+D,$ \\
		D N A $\cdot$ D+D & $\overset{\theta_7}{\rightarrow} D N A \cdot 2 D,$ &
		D N A $\cdot$ 2 D & $\overset{\theta_8}{\rightarrow} D N A \cdot D+D,$ \\
		2M & $\overset{\theta_9}{\rightarrow} D,$ &
		D & $\overset{\theta_{10}}{\rightarrow} 2 M$,
	\end{tabular}

	where $(\theta_1,\dots,\theta_{10})=(0.043, 0.0007, 0.0715, 0.0039, 0.0199, 0.479, 0.000199, 8.77\times10^{-12}, 0.083, 0.5)$. As the initial state, we use $\mathbf{X}_0=(M(0),D(0),RNA(0),DNA(0),DNA\cdot D(0),DNA\cdot2D(0))=(2,6,0,0,2,0)$, and the final time is $T=1$. We aim to estimate the rare event probability $\mathbb{P}(D(T)>8)$.
\end{example}

\subsection{Markovian Projection Results}
Through simulations for Examples~\ref{exp:mm} and \ref{exp:Goutsias}, we numerically demonstrate that the distribution of the MP process $\bar{\boldsymbol{S}}(T)\mid_{\{\mathbf{X}_0=\mathbf{x}_0\}}$ matches the conditional distribution of the projected process $\boldsymbol{S}(t)\mid_{\{\mathbf{X}_0=\mathbf{x}_0\}}=P(\mathbf{X}(t))\mid_{\{\mathbf{X}_0=\mathbf{x}_0\}}$, as shown in Theorem~\ref{theo:MP}. For both examples, we use an MP projection with $\bar{d}=1$ using the projection given in \eqref{eq:projection}, where the projected species is indexed as $i=3$ in Example~\ref{exp:mm} and as $i=2$ in Example~\ref{exp:Goutsias}.

The MP is based on $M=10^4$ TL sample paths with a step size of $\Delta t=2^{-8}$ and uses the orthonormal basis of polynomials described in Remark~\ref{rem:orth_basis} with $\Lambda = \{0,1,2\} \times \{0,1,2\}$  for the $L^2$ regression. For the numerical comparison of the full-dimensional SRN with the MP-SRN, we use explicit TL paths with step size $\Delta t=2^{-8}$ (see Remark \ref{rem:simMP}). In Figure \ref{fig:samplepaths}, we show sample paths of the original SRN and the mimicking MP-SRN. Figure~\ref{fig:hist} shows the relative occurrences of states at final time $T$ with $M_{fw}=10^4$ sample paths, comparing the TL estimate of $P(\mathbf{X}(t))\mid_{\{\mathbf{X}_0=\mathbf{x}_0\}}$ and the TL-MP estimate of $\bar{\boldsymbol{S}}(T)\mid_{\{\mathbf{X}_0=\mathbf{x}_0\}}$. In both examples, the one-dimensional MP process mimics the distribution of the state of interest $X_i(T)$ of the original SRNs. Further quantification and analyses of the MP error are left for future work. In this work, a detailed analysis of the MP error is less relevant because the MP is used as a tool to derive IS controls for the full-dimensional process, and the IS is bias-free with respect to the TL scheme.
	\begin{figure}
	\caption{Sample paths of TL and MP-TL for a step size of $\Delta t=2^{-8}$.\label{fig:samplepaths}}
	\subfloat [Example \ref{exp:mm}: 30 TL paths] 
	{\includegraphics[scale=0.40]{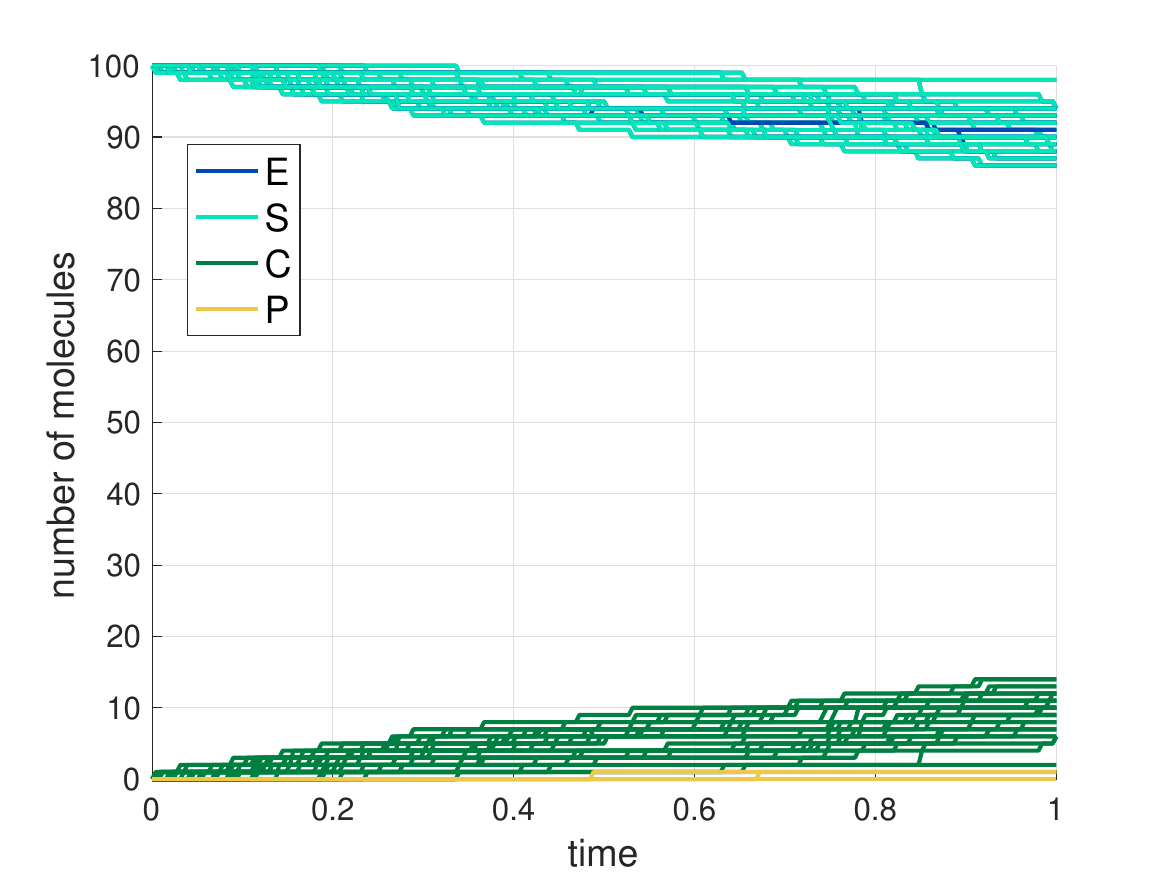}}
	\subfloat [Example \ref{exp:mm}: 30 TL-MP paths] 
	{\includegraphics[scale=0.40]{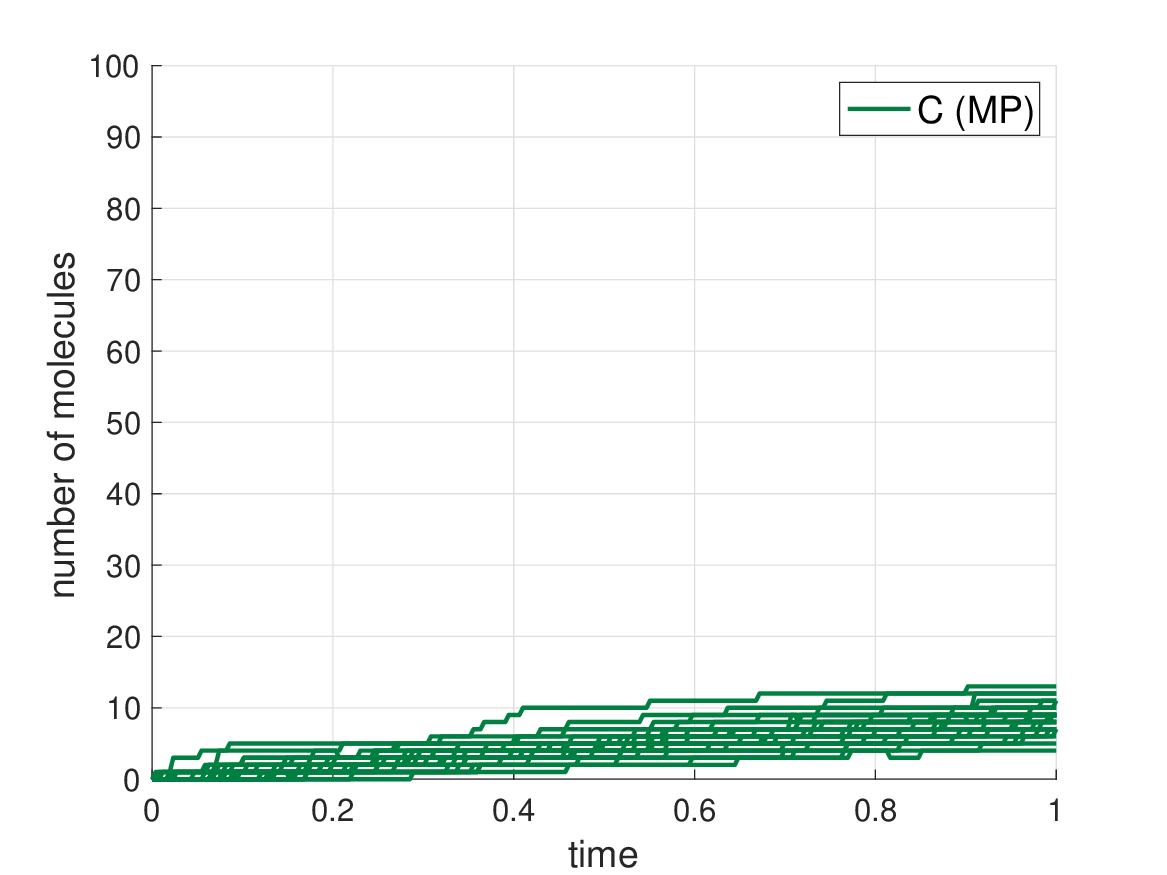}}\\
	\subfloat [Example \ref{exp:Goutsias}: 30 TL paths] 
	{\includegraphics[scale=0.40]{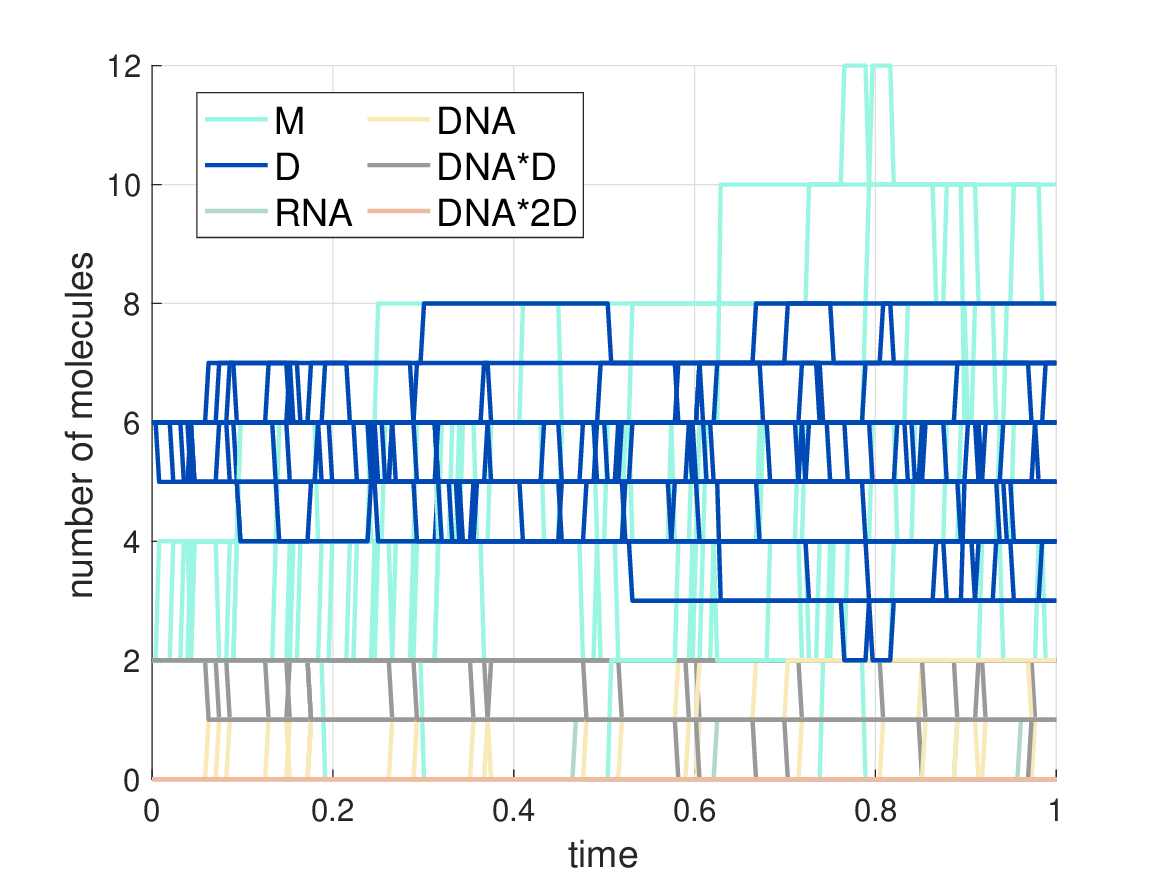}}
	\subfloat [Example \ref{exp:Goutsias}: 30 TL-MP paths] 
	{\includegraphics[scale=0.40]{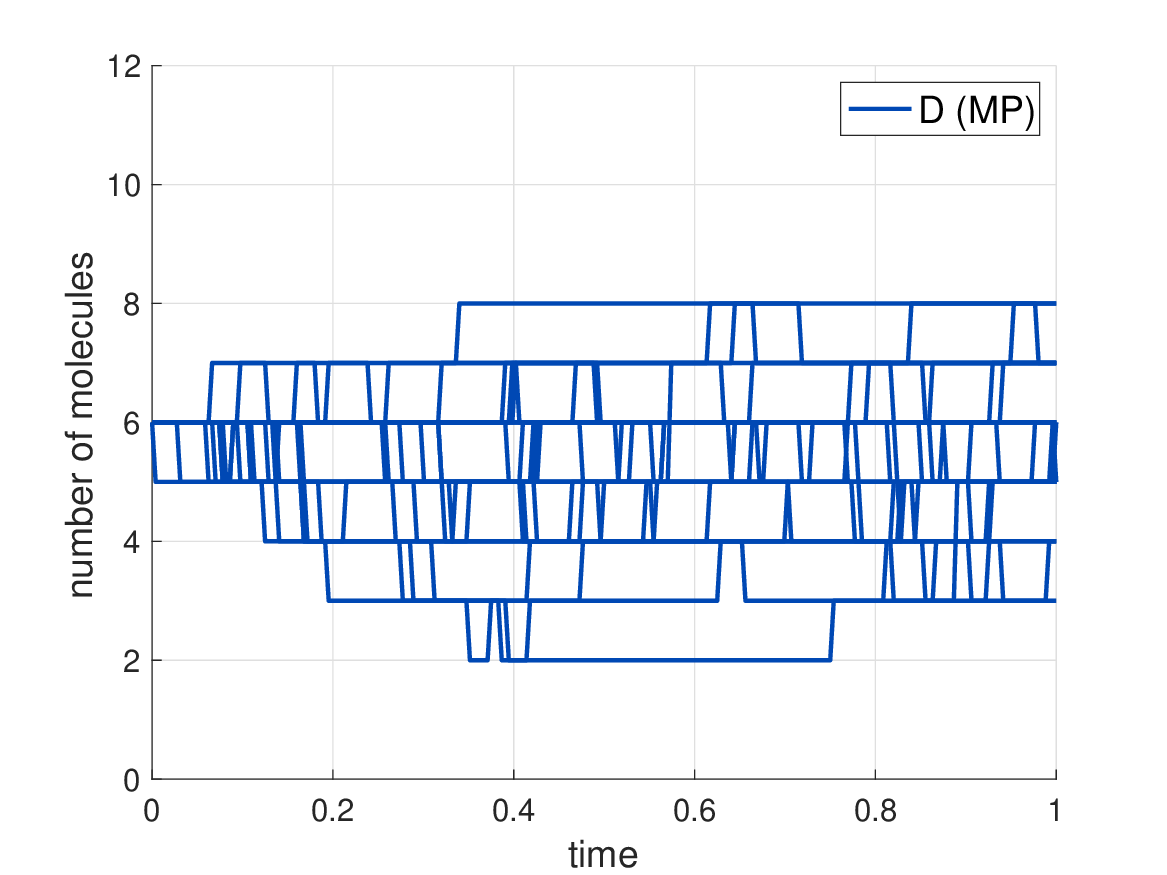}}
\end{figure}
\begin{figure}[h]
	\caption{Relative occurrences of states at final time $T$ in $M_{test}=10^4$ sample paths, comparing the TL estimate of $P(\mathbf{X}(t))\mid_{\{\mathbf{X}_0=\mathbf{x}_0\}}$ and the MP estimate of $\bar{\boldsymbol{S}}(T)\mid_{\{\mathbf{X}_0=\mathbf{x}_0\}}$. We set the step size to $\Delta t=2^{-8}$ for the sample paths. The MP is based on $M=10^4$ TL paths with a step size of $\Delta t=2^{-8}$ and uses the orthogonal basis of polynomials described in Remark~\ref{rem:orth_basis}, where $\Lambda = \{0,1,2\} \times \{0,1,2\}$. \label{fig:hist}}
	\subfloat [Example \ref{exp:mm}] 
	{\includegraphics[scale=0.40]{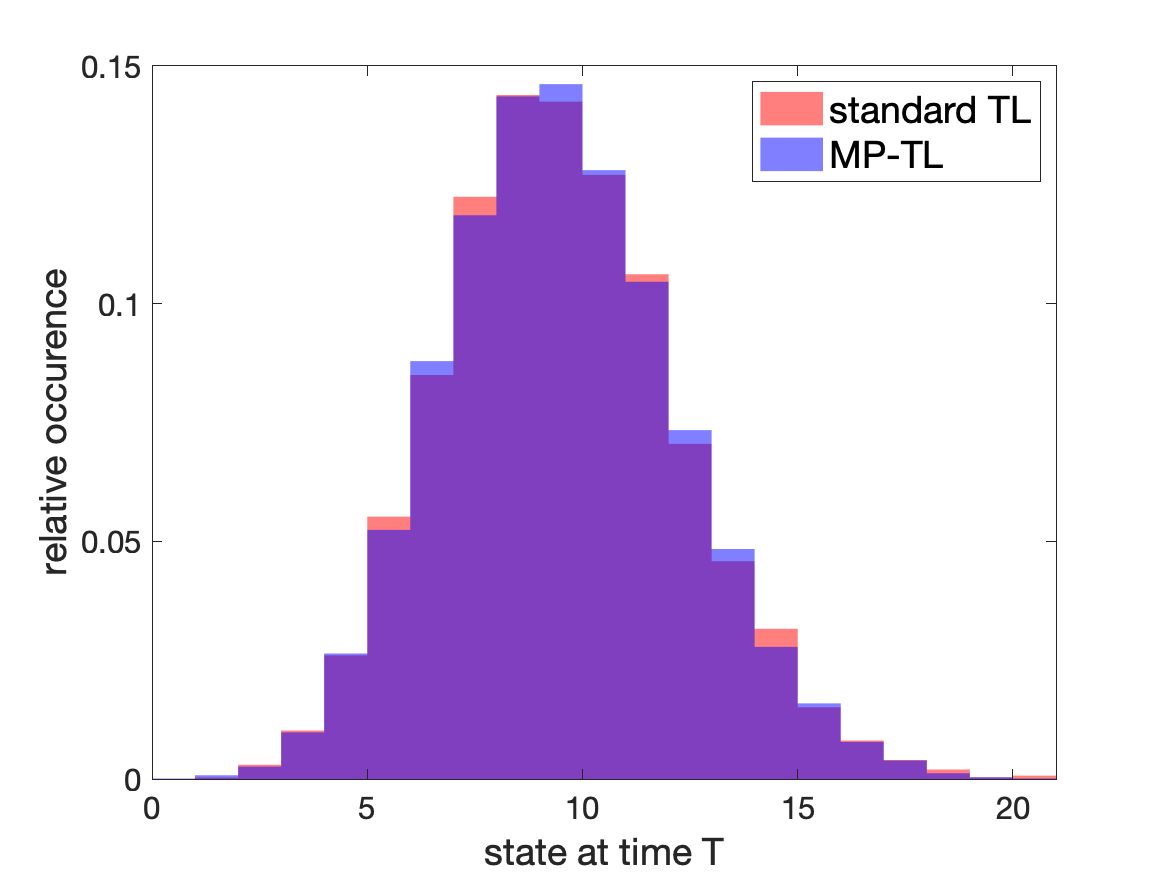}}
	\subfloat [Example \ref{exp:Goutsias}]
	{\includegraphics[scale=0.40]{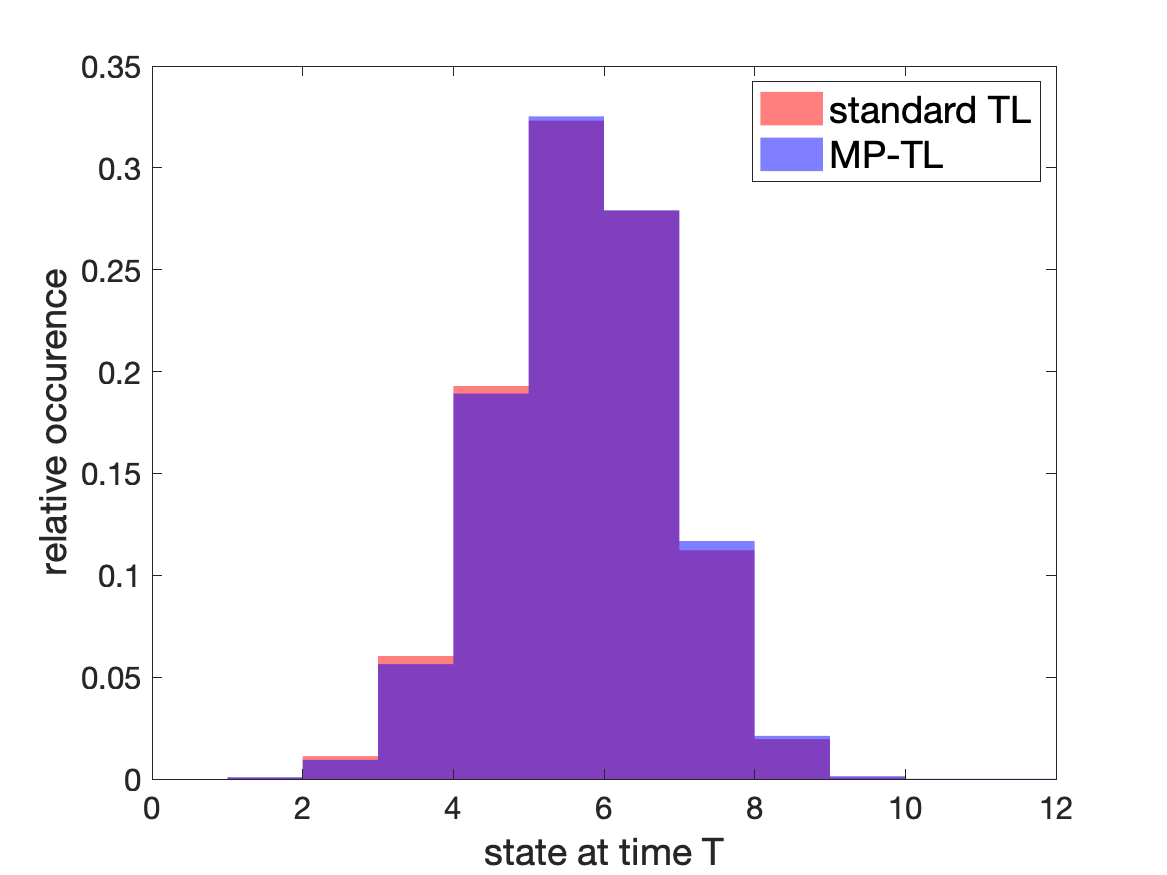}}
\end{figure}

\subsection{Makovian Projection-Importance Sampling Results}\label{sec:MPsim}
For the numerical experiments, we use a four-dimensional and a six-dimensional SRNs with the observable $g(\mathbf{x})=\mathbbm{1}_{\{x_i>\gamma\}}$, where $i$ and $\gamma$ are specified in Examples~\ref{exp:mm} and \ref{exp:Goutsias}. Figure~\ref{fig:hist} indicates that this observable leads to a rare event probability estimation for which an MC estimate is insufficient. We use the workflow in Figure~\ref{fig:scheme} with separate simulations for various $\Delta t$ values for the MP-IS simulations. The MP is based on $M=10^4$ TL sample paths each. The MP-IS-MC estimator, the sample variance, and the kurtosis estimate are based on $M_{fw}=10^6$ IS sample paths.

The relative error is more relevant than the absolute error for rare event probabilities. Therefore, we display the squared coefficient of variation~\cite{ben2023learning, ben2021efficient} in the simulations results, which is given by the following for a random variable $X$:
	\begin{align}
		Var_{rel}[X]=\frac{Var[X]}{\mathbb{E}[X]^2}.
	\end{align}
The kurtosis is a good indicator of the robustness of the variance estimator (see \cite{ben2020importance, ben2023learning} for the connection between the sample variance and kurtosis).

Figure~\ref{fig:mm} shows the simulation results for the four-dimensional Example \ref{exp:mm} for different step sizes $\Delta t$. The quantity of interest is a rare event probability with a magnitude of $10^{-5}$. For a step size of $\Delta t=2^{-10}$, the proposed MP-IS approach reduces the squared coefficient of variation by a factor of $10^6$ compared to the standard MC-TL approach. The third plot in Figure~\ref{fig:mm} indicates that the kurtosis of the proposed MP-IS approach is below the kurtosis for standard TL for all observed step sizes $\Delta t$, confirming that the proposed approach results in a robust variance estimator. In Figure~\ref{fig:mm} (d), we show the required number of sample paths to reach a prescribed relative tolerance (see \eqref{eq:relM} and \eqref{eq:reldt}). We compare the required number of sample paths for standard TL with the sample paths in the forward run of the IS-TL estimator, $M_{fw}$, and the total number of paths $M+M_{fw}$ including $M=10^4$ TL paths to derive the MP (see Section \ref{sec:comp_cost_MP}). We observe that for small tolerances, the additional paths to derive the MP become negligible compared to the cost of the forward IS-TL run. Further, we reduce the number of paths significantly compared to the standard TL approach. For small tolerances, the reduction of sample paths is up to a factor of approximately $10^6$ compared to the standard TL approach. 

The second application of the proposed IS approach is the six-dimensional Example \ref{exp:Goutsias}. Figure~\ref{fig:Goutsias} shows that this rare event probability has a magnitude of $10^{-3}$. We observe that, for $\Delta t \leq 2^{-3}$, the squared coefficient of variation of the proposed MP-IS approach is reduced compared to the standard TL-MC approach. For a step size of $\Delta t=2^{-10}$, this is a variance reduction of a factor of approximately $500$. Note that, this example achieves less variance reduction than Example \ref{exp:mm} due to a less rare quantity of interest. For most step sizes $\Delta t$, the kurtosis of the proposed IS approaches is moderately increased compared to the standard TL estimator, with decreasing kurtosis for smaller $\Delta t$. This outcome indicates a potentially insatiable variance estimator for coarse time steps of $\Delta t > 2^{-7}$. For finer time steps, we expect a robust variance estimator. For small tolerances the total number of paths is reduced by a factor of approximately $500$.

\begin{figure}[h!]
	\caption{Example~\ref{exp:mm} \label{fig:mm} with separate simulations for different step sizes $\Delta t$ for the proposed IS methods and the standard TL-MC estimator: (a) sample mean, (b) squared coefficient of variation, (c) kurtosis, and (d) number of required sample paths to reach a prescribed relative tolerance $\text{TOL}_{rel}$. The used MP is based on $M=10^4$ TL sample paths each, and the estimators in (a), (b) and (c) are derived based on $M_{fw}=10^6$ sample paths. For (d), we use \eqref{eq:reldt} and \eqref{eq:relM} and the dashed line represents the total number of sample paths including the derivation of the MP (i.e. $M+M^*_{rel}(\text{TOL}_{rel})$).}
	\subfloat[]{
		\includegraphics[scale=0.40]{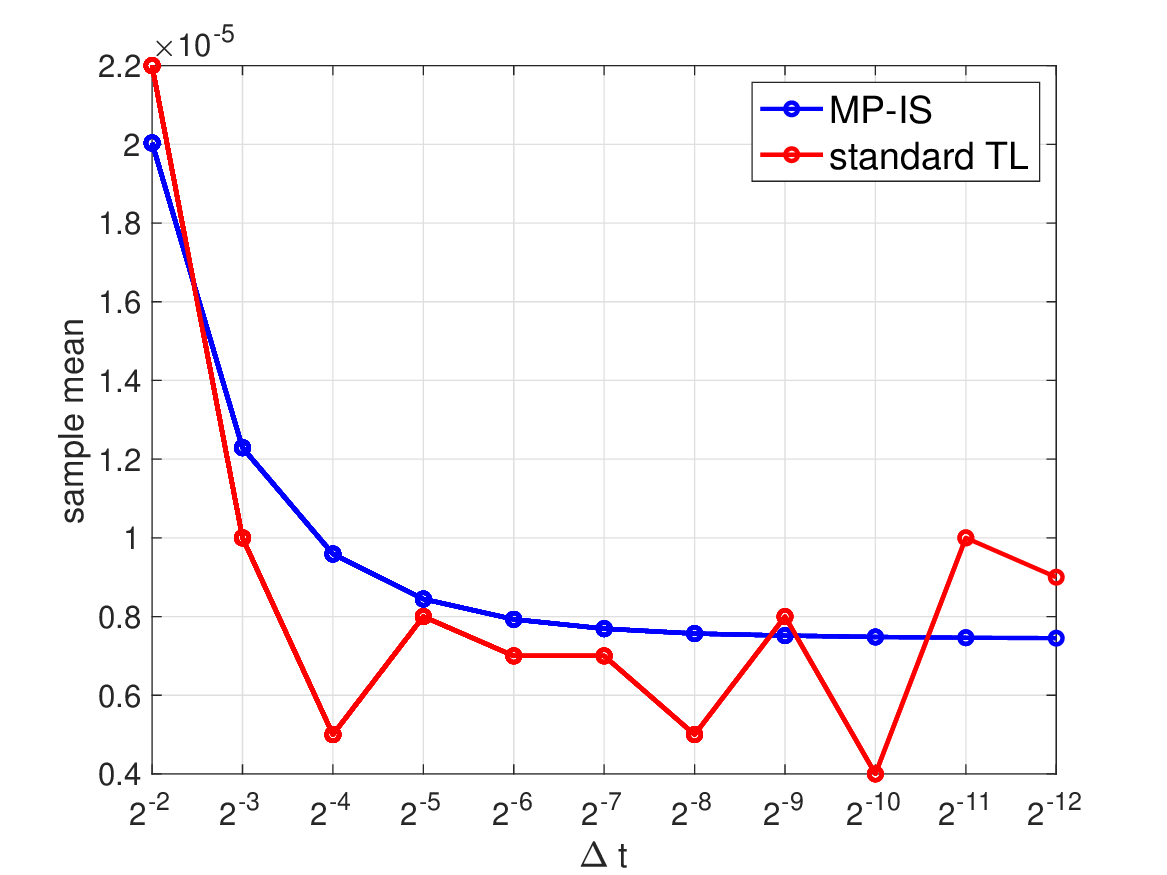}
	}
	\subfloat[]{
		\includegraphics[scale=0.40]{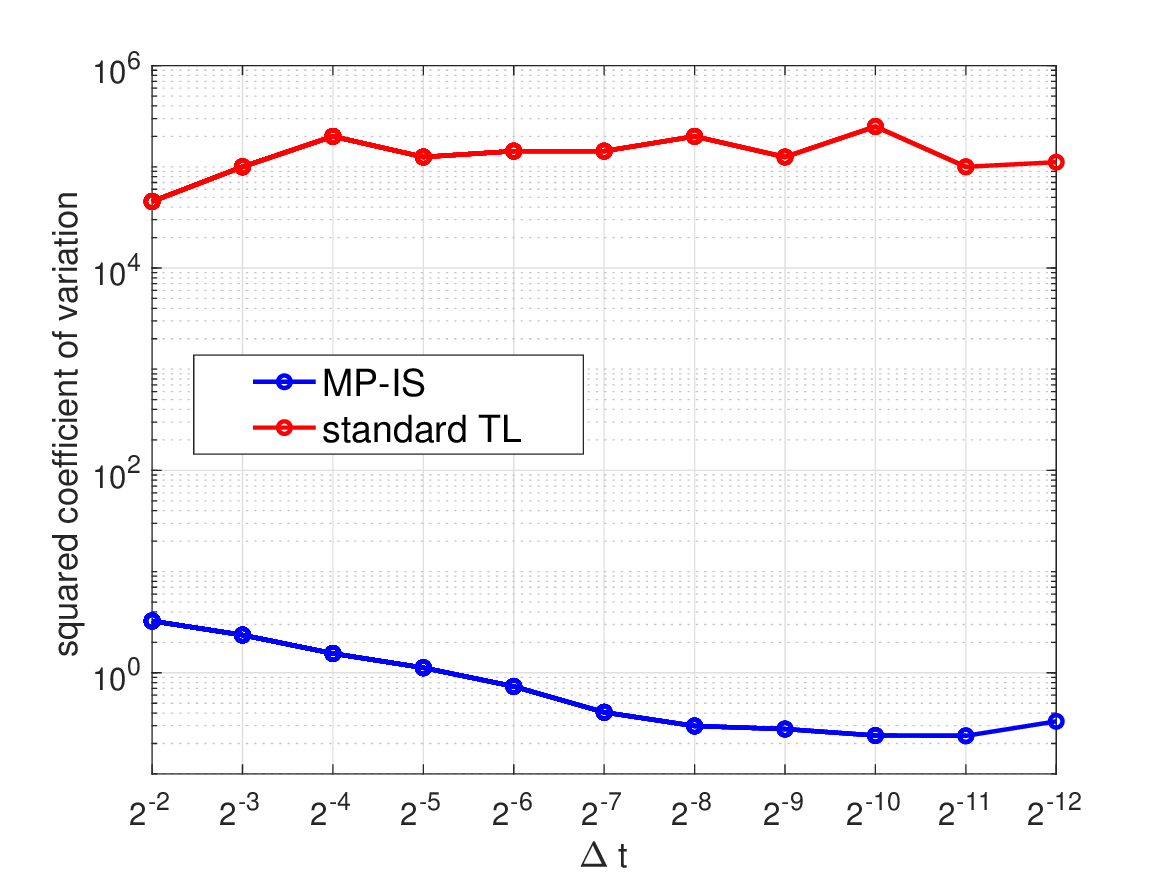}}\\
	\subfloat[]{	\includegraphics[scale=0.40]{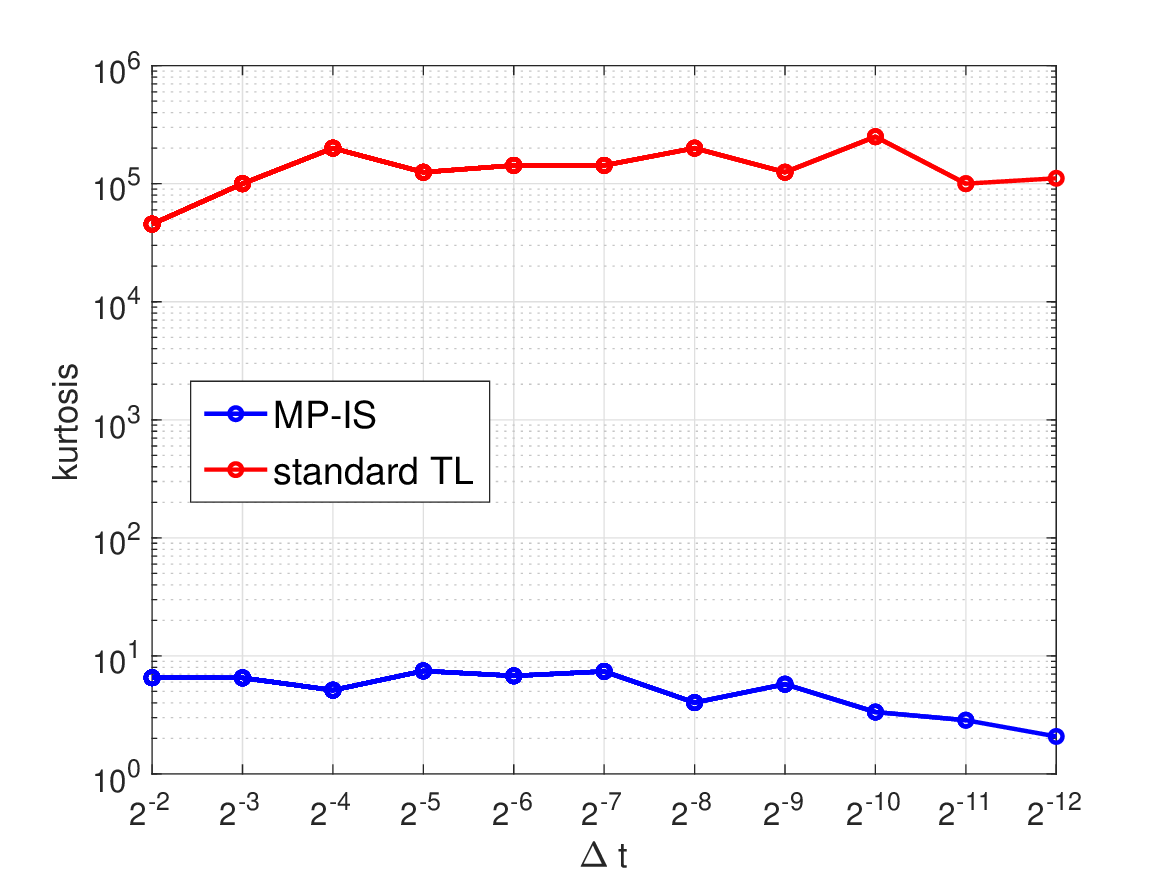}}		
		\subfloat[]{	\includegraphics[scale=0.40]{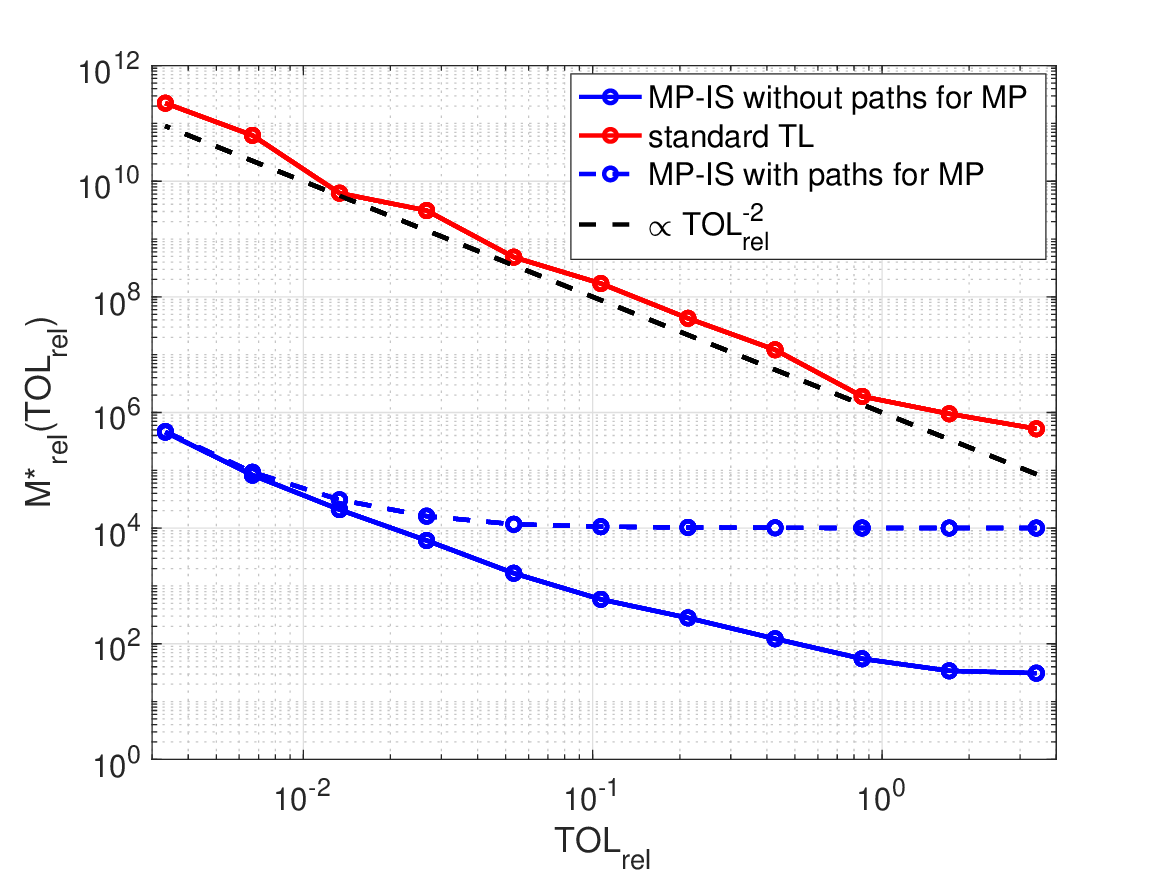}}
\end{figure}
\begin{figure}[h]
	\caption{Example \ref{exp:Goutsias} \label{fig:Goutsia} with separate simulations for different step sizes $\Delta t$ for the proposed IS methods and the standard TL-MC estimator: (a) sample mean, (b) squared coefficient of variation, (c) kurtosis, and (d) number of required sample paths to reach a prescribed relative tolerance $\text{TOL}_{rel}$. The used MP is based on $M=10^4$ TL sample paths each, and the estimators in (a), (b) and (c) are derived based on $M_{fw}=10^6$ sample paths. \label{fig:Goutsias}}
	\subfloat[]{\includegraphics[scale=0.40]{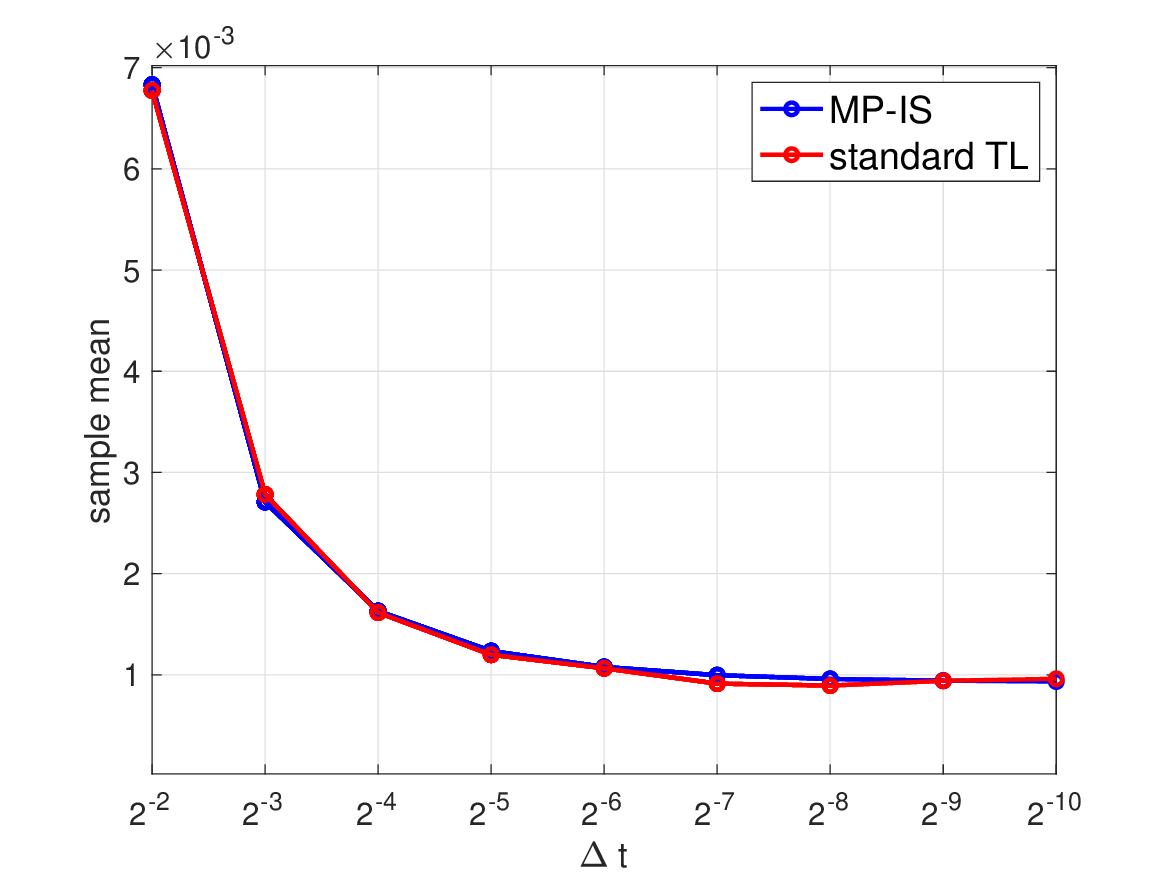}}
	\subfloat[]{	\includegraphics[scale=0.40]{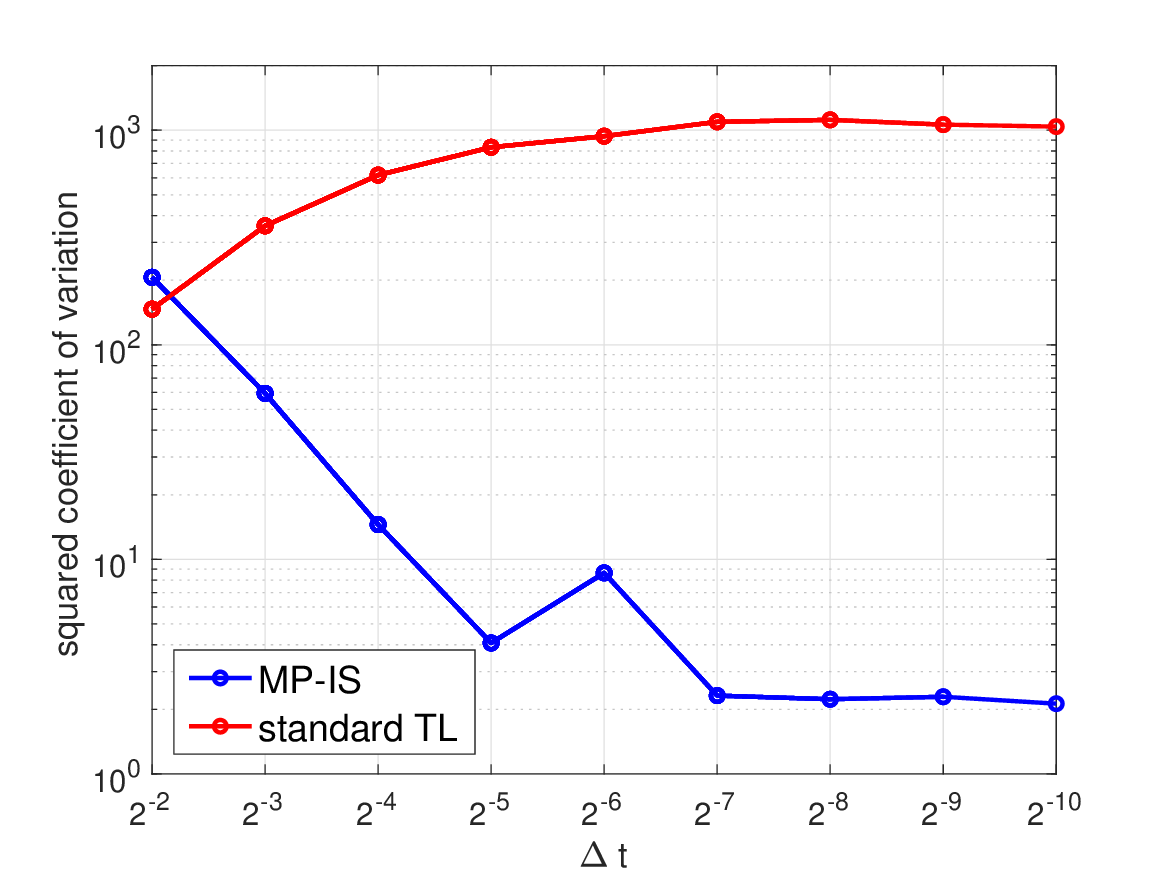}}\\
	\subfloat[]{	\includegraphics[scale=0.40]{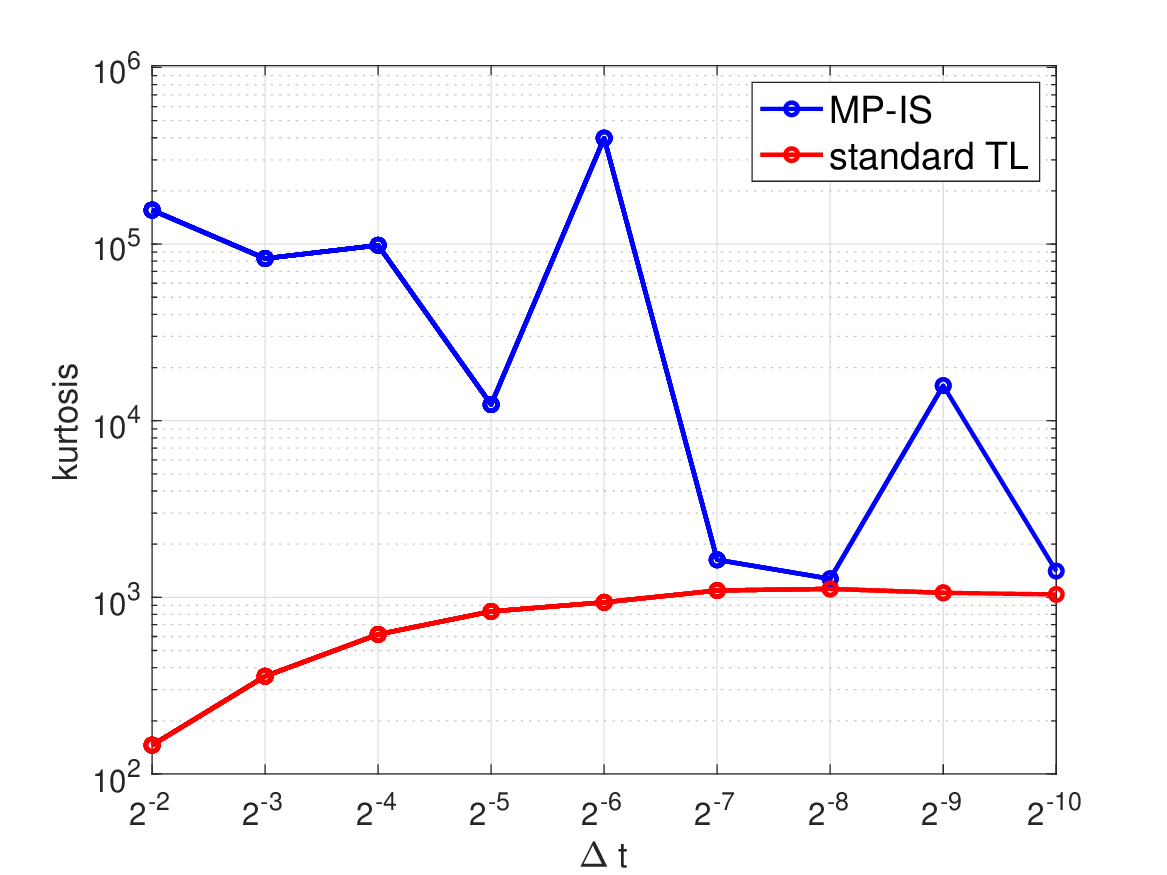}}
	\subfloat[]{	\includegraphics[scale=0.40]{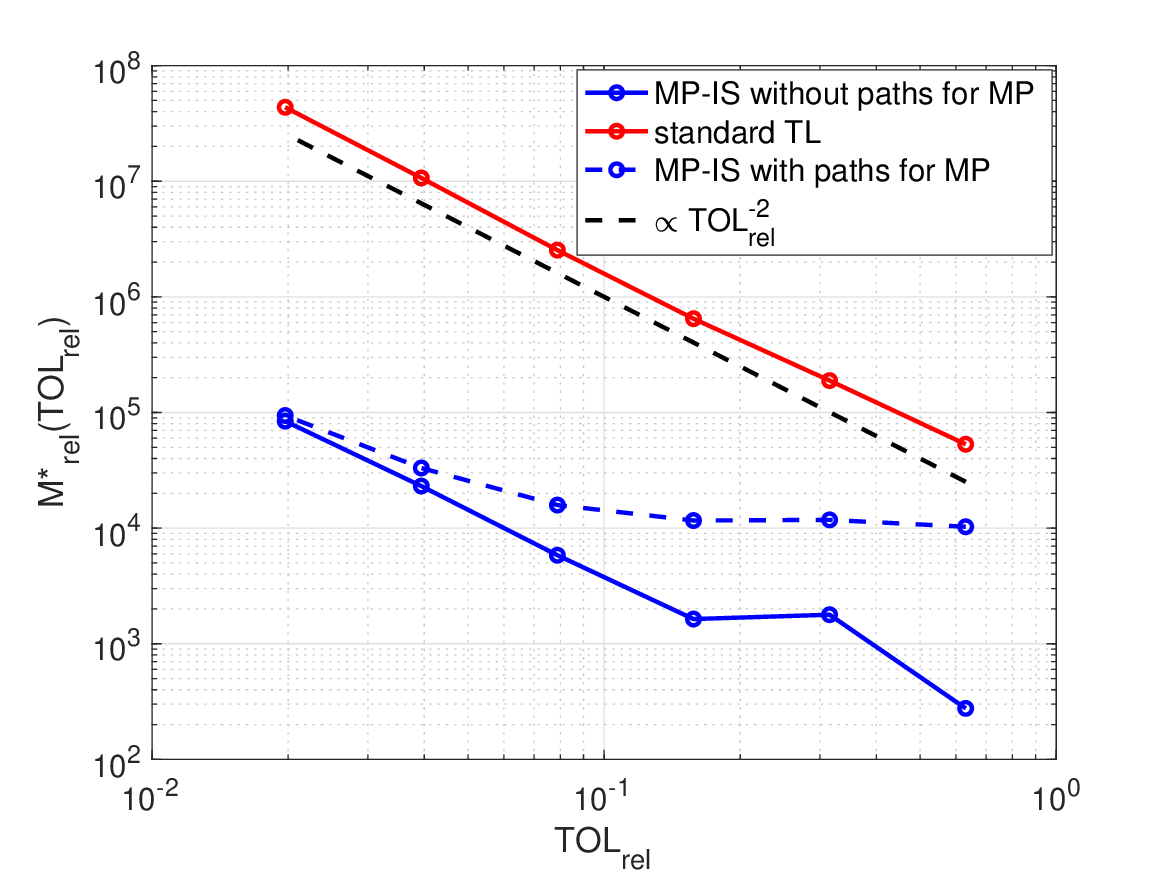}}
\end{figure}
\section{Conclusion}\label{sec:conclusion}
In conclusion, this work presented an efficient IS scheme for estimating rare event probabilities for SRNs. We utilized a class of parameterized IS measure changes originally introduced in \cite{ben2023learning}, for which near-optimal IS controls can be derived through a SOC formulation. We showed that the value function associated with this formulation can be expressed as a solution of a set of coupled ODEs, the HJB equations. One challenge encountered in solving the HJB equations is the curse of dimensionality, arising from the high-dimensional SRN. To address this issue, we introduced a dimension reduction approach for the setting of SRNs, namely MP. Then, we used a discrete $L^2$ regression to approximate the propensity and the stoichiometric vector of the MP-SRN. We demonstrated how the MP-SRN can be used for solving a significantly lower-dimensional HJB system, and how the resulting parameters are then mapped back to the full-dimensional SRNs to derive near-optimal IS controls. Our numerical simulations showed substantial variance reduction for the MP-IS-MC estimator compared to the standard MC-TL estimator for rare event probability estimations.

\vspace{1cm}
\textbf{Acknowledgments}
This publication is based upon work supported by the King Abdullah University of Science and Technology (KAUST) Office of Sponsored Research (OSR) under Award No. OSR-2019-CRG8-4033. This work was performed as part of the Helmholtz School for Data Science in Life, Earth and Energy (HDS-LEE) and received funding from the Helmholtz Association of German Research Centres and the Alexander von Humboldt Foundation. For the purpose of open access, the authors have applied a Creative Commons Attribution (CC BY) licence to any Author Accepted Manuscript version arising from this submission.

\bibliographystyle{plain}
\bibliography{MP_IS_manuscript} 

\appendix
\section{Proof for Corollary~\ref{prop:HJB}}\label{Appendix:HJB}
\begin{proof}
	For $\mathbf{x} \in \mathbb{N}^d$, we define $\tilde{u}(\cdot, \mathbf{x};\Delta t)$ as the  continuous smooth extension of $u_{\Delta t}(\cdot, \mathbf{x})$  (defined in \eqref{eq:exact_optival})  on $[0,T]$. Consequently, we denote the continuous-time IS controls by $\boldsymbol{\delta}(\cdot,\mathbf{x}): [0,T]\rightarrow \mathcal{A}_{\mathbf{x}}$ for $\mathbf{x}\in\mathbb{N}^d$.
	Then, the Taylor expansion of $\tilde{u}(t+\Delta t, \mathbf{x};\Delta t)$ in $t$ results in the following:
	\begin{align}\label{eq:Tayloru}
		\tilde{u}(t+\Delta t, \mathbf{x};\Delta t)=\tilde{u}(t, \mathbf{x};\Delta t)+\Delta t \partial_t \tilde{u}(t, \mathbf{x};\Delta t)+\mathcal{O}\left(\Delta t^2\right), \mathbf{x}\in \mathbb{N}^d.
	\end{align}
	
	By the definition of the value function \ref{def:optival2}, the final condition is given by
	\begin{align*}
		\tilde{u}(T, \mathbf{x};\Delta t)=g^2(\mathbf{x}), \mathbf{x} \in \mathbb{N}^d.
	\end{align*}
	
	For $t=T-\Delta t, \ldots, 0$, we apply to \eqref{eq:exact_optival} from Theorem \ref{theo:exact_optival} a Taylor expansion around $\Delta t=0$ to the exponential term and \eqref{eq:Tayloru}  to $\tilde{u}(t+\Delta t, \mathbf{x};\Delta t)$:
	\begin{small}
		\begin{align}\label{eq:HJBproof}
			\tilde{u}(t, \mathbf{x};\Delta t)&=
			\inf_{\boldsymbol{\delta}(t,\mathbf{x})\in\mathcal{A}_\mathbf{x}}\exp\left(\left(-2\sum_{j=1}^J a_j(\mathbf{x})+\sum_{j=1}^J\delta_j(t,\mathbf{x})\right)\Delta t\right) \nonumber\\
			&\quad \quad \quad \quad \times \sum_{\mathbf{p} \in \mathbb{N}^J}\left(\prod_{j=1}^{J} \frac{(\Delta t \cdot \delta_j(t,\mathbf{x}))^{p_j}}{p_j!} \left(\frac{a_j(\mathbf{x})}{\delta_j(t,\mathbf{x})}\right)^{2p_j} \right)\cdot \tilde{u}(t+\Delta t,\max(\mathbf{0},\mathbf{x}+ \boldsymbol{\nu}\mathbf{p});\Delta t)\nonumber\\
			&=
			\inf_{\boldsymbol{\delta}(t,\mathbf{x}) \in \mathcal{A}_\mathbf{x}} \left(1+ \left(-2 \sum_{j=1}^J a_j(\mathbf{x})+\sum_{j=1}^J \delta_ j(t,\mathbf{x})\right)\Delta t +\mathcal{O}(\Delta t^2)\right) \nonumber\\
			&\quad \quad \quad \quad\times \left[\sum_{\mathbf{p}\in\mathbb{N}^J}\left(\prod_{j=1}^{J} \frac{(\Delta t \cdot \delta_ j(t,\mathbf{x}))^{p_j}}{p_j!} \left(\frac{a_j(\mathbf{x})}{ \delta_ j(t,\mathbf{x})}\right)^{2p_j} \right)\right. \nonumber\\
			&\quad \quad \quad \quad\left.\vphantom{\prod_{j=1}^{J}} \cdot\left( \tilde{u}\left(t, \max( \mathbf{0},\mathbf{x}+\boldsymbol{\nu}\mathbf{p});\Delta t\right)+\Delta t \partial_t 	\tilde{u}\left(t, \max( \mathbf{0},\mathbf{x}+\boldsymbol{\nu}\mathbf{p});\Delta t \right)+\mathcal{O}(\Delta t^2)\right)\right] \nonumber\\
			\overset{(*)}{\Longrightarrow}- \partial_t &\tilde{u}(t, \mathbf{x};\Delta t) =\inf _{\boldsymbol{\delta}(t,\mathbf{x}) \in \mathcal{A}_\mathbf{x}}\left(-2 \sum_{j=1}^J a_j(\mathbf{x})+\sum_{j=1}^J  \delta_ j(t,\mathbf{x})\right)	\tilde{u}(t, \mathbf{x};\Delta t)+\mathcal{O}(\Delta t) \nonumber\\
			&\quad \quad \quad \quad\quad \quad+(1+\mathcal{O}(\Delta t)) \left[\sum_{\mathbf{p} \neq \mathbf{0}} \Delta t^{|\mathbf{p}|-1}\left(\prod_{j=1}^J \frac{a_j(\mathbf{x})^{2 p_j}}{p_{j} ! \cdot\left( \delta_ j(t,\mathbf{x})\right)^{p_j}}\right)\right.\nonumber\\
			&\quad \quad \quad \quad \quad\quad \left.\vphantom{\prod_{j=1}^{J}}\cdot \left(	\tilde{u}\left(t, \max( \mathbf{0},\mathbf{x}+\boldsymbol{\nu}\mathbf{p});\Delta t\right) +\mathcal{O}(\Delta t) \right)\right],
		\end{align}
	\end{small}
	where $|\mathbf{p}|:=\sum_{i=1}^Jp_j$, and $ \delta_ j(t,\mathbf{x}):=\left(\boldsymbol{\delta}(t,\mathbf{x})\right)_j$. In (*), we split the sum, rearrange the terms, divide by $\Delta t$, and collect the terms of $\mathcal{O}(\Delta t)$.
	
	The limit for $\Delta t \rightarrow 0$ in \eqref{eq:HJBproof} is denoted by $\tilde{u}(t,\mathbf{x})$, leading to \eqref{eq:HJB} for $0<t<T$ and $\mathbf{x}\in\mathbb{N}^d$.
\end{proof}

\section{Proof for Theorem~\ref{theo:MP}}\label{Appendix:MP}
\begin{proof}
	We let $f:\mathbb{R}^{\bar{d}}\rightarrow \mathbb{R}$ be an arbitrary bounded continuous function and $\bar{\boldsymbol{S}}$ be defined in \eqref{eq:proj}. We consider the following weak approximation error: 
	\begin{align}\label{eq:weakerror}
		\varepsilon_T:=\mathbb{E}\left[f(P(\mathbf{X}(T))) \middle|\mathbf{X}(0)=\mathbf{x}_0 \right]-\mathbb{E}\left[f(\bar{\boldsymbol{S}}(T)) \middle|\bar{\boldsymbol{S}}(0)=P(\mathbf{x}_0)\right].
	\end{align}
	For $t \in [0,T]$, we define the cost to go function as 
	\begin{align*}
		\bar{v}(t,\boldsymbol{s}):=\mathbb{E}\left[ f(\bar{\boldsymbol{S}}(T))\middle|\bar{\boldsymbol{S}}(t)=\boldsymbol{s} \right].
	\end{align*}
	Then, we can represent the weak error in \eqref{eq:weakerror} as follows:
	\begin{align}\label{eq:weakerror2}
		\varepsilon_T=\mathbb{E}\left[\bar{v}(T,P(\mathbf{X}(T)))\middle|\mathbf{X}(0)=\mathbf{x}_0\right]-\bar{v}(0,P(\mathbf{x}_0)).
	\end{align}
	Using Dynkin's formula \cite{doi:10.1137/1.9780898718638} and \eqref{eq:exact_process}, we can express the first term in \eqref{eq:weakerror} as follows:
	\begin{align*}
		\mathbb{E}&\left[\bar{v}(T,P(\mathbf{X}(T)))\middle|\mathbf{X}(0)=\mathbf{x}_0\right]\\
		&=\bar{v}(0,P(\mathbf{x}_0))+\int_0^T \mathbb{E}\left[\vphantom{\sum_{j=1}^J} \partial_t\bar{v}(\tau,P(\mathbf{X}(\tau)))\right.\\
		&\left.+\sum_{j=1}^J a_j(\mathbf{X}(\tau))\left( \bar{v}(\tau,P(\mathbf{X}(\tau)+\boldsymbol{\nu}_j))-\bar{v}(\tau,P(\mathbf{X}(\tau)))\right) \middle|\mathbf{X}(0)=\mathbf{x}_0\right]d\tau.
	\end{align*}
	The Kolmogorov backward equations  \cite{doi:10.1137/1.9780898718638} of $\bar{\boldsymbol{S}}$ are given as
	\begin{align*}
		\partial_{\tau}\bar{v}(\tau,\boldsymbol{s})=-\sum_{j=1}^J \bar{a}_j(\tau,\boldsymbol{s})\left( \bar{v}(\tau,\boldsymbol{s}+\bar{\boldsymbol{\nu}}_j)-\bar{v}(\tau,\boldsymbol{s})\right), \mathbf{s} \in \mathbb{N}^{\bar{d}},
	\end{align*}
	implying that the weak error simplifies to
	\begin{small}
		\begin{align}\label{eq:epst}
			\varepsilon_T
			&=\sum_{j=1}^J \int_0^T \mathbb{E}\left[a_j(\mathbf{X}(\tau))\bar{v}(\tau,P(\mathbf{X}(\tau)+\boldsymbol{\nu}_j))-\bar{a}_j(\tau,P(\mathbf{X}(\tau))) \bar{v}(\tau,P(\mathbf{X}(\tau))+\bar{\boldsymbol{\nu}}_j)\middle|\mathbf{X}(0)=\mathbf{x}_0\right]\nonumber\\
			&-\mathbb{E}\left[\left(a_j(\mathbf{X}(\tau))-\bar{a}_j(\tau,P(\mathbf{X}(\tau)))\right)\bar{v}(\tau,P(\mathbf{X}(\tau))) \middle|\mathbf{X}(0)=\mathbf{x}_0\right]d\tau.
		\end{align}
	\end{small}
	Next, we choose $\bar{a}_j$ and $\bar{\boldsymbol{\nu}}_j$ for $j=1,\dots,J$ such that $\varepsilon_T=0$ for any function $f$. We consider the second term in \eqref{eq:epst} and use the tower property to obtain
	\begin{align}\label{eq:term2}
		&\mathbb{E}\left[\left(a_j(\mathbf{X}(\tau))-\bar{a}_j(\tau,P(\mathbf{X}(\tau)))\right)\bar{v}(\tau,P(\mathbf{X}(\tau))) \middle| \mathbf{X}(0)=\mathbf{x}_0\right]\nonumber\\
		&=\mathbb{E}\left[ \mathbb{E}\left[ \left(a_j(\mathbf{X}(\tau))-\bar{a}_j(\tau,P(\mathbf{X}(\tau)))\right)\bar{v}(\tau,P(\mathbf{X}(\tau)))\middle| P(\mathbf{X}(\tau)), \mathbf{X}(0)=\mathbf{x}_0\right]\middle| \mathbf{X}(0)=\mathbf{x}_0\right]\nonumber\\
		&=\mathbb{E}\left[\left( \mathbb{E}\left[{a}_j(\mathbf{X}(\tau))\middle| P(\mathbf{X}(\tau)), \mathbf{X}(0)=\mathbf{x}_0\right]-\bar{a}_j(\tau,P(\mathbf{X}(\tau))) \right)\bar{v}(\tau,P(\mathbf{X}(\tau)))\middle|\mathbf{X}(0)=\mathbf{x}_0\right].
	\end{align}
	To ensure that \eqref{eq:term2}$=0$ for any function $f$, we obtain the following:
	\begin{align}\label{eq:proofabar}
		\bar{a}_j(\tau,P(\mathbf{X}(\tau)))= \mathbb{E}\left[{a}_j(\mathbf{X}(\tau))\middle| P(\mathbf{X}(\tau)), \mathbf{X}(0)=\mathbf{x}_0\right], j=1,\dots,J.
	\end{align}
	Applying \eqref{eq:proofabar} and the tower property for the first term, we derive
	\begin{align}\label{eq:term1}
		&	\mathbb{E}\left[a_j(\mathbf{X}(\tau))\bar{v}(\tau,P(\mathbf{X}(\tau)+\boldsymbol{\nu}_j))-\bar{a}_j(\tau,P(\mathbf{X}(\tau))) \bar{v}(\tau,P(\mathbf{X}(\tau))+\bar{\boldsymbol{\nu}}_j)\middle|\mathbf{X}(0)=\mathbf{x}_0\right]\nonumber\\
		&=\mathbb{E}\left[ \mathbb{E}\left[a_j(\mathbf{X}(\tau))\bar{v}(\tau,P(\mathbf{X}(\tau)+\boldsymbol{\nu}_j))\right.\right.\nonumber\\
		&\left.\left.-\bar{a}_j(\tau,P(\mathbf{X}(\tau))) \bar{v}(\tau,P(\mathbf{X}(\tau))+\bar{\boldsymbol{\nu}}_j)\middle| P(\mathbf{X}(\tau)), \mathbf{X}(0)=\mathbf{x}_0\right]\middle|\mathbf{X}(0)=\mathbf{x}_0\right]\nonumber\\
		&=\mathbb{E}\left[ \mathbb{E}\left[a_j(\mathbf{X}(\tau))\middle|P(\mathbf{X}(\tau)),  \mathbf{X}(0)=\mathbf{x}_0\right]\bar{v}(\tau,P(\mathbf{X}(\tau))+P(\boldsymbol{\nu}_j)))\right.\nonumber\\
		&\left.-\bar{a}_j(\tau,P(\mathbf{X}(\tau))) \bar{v}(\tau,P(\mathbf{X}(\tau))+\bar{\boldsymbol{\nu}}_j)\middle|\mathbf{X}(0)=\mathbf{x}_0\right]\nonumber\\
		&=\mathbb{E}\left[ \vphantom{\underbrace{\bar{v}}_{ \bar{\boldsymbol{\nu}}_j}} \mathbb{E}\left[a_j(\mathbf{X}(\tau))\middle|P(\mathbf{X}(\tau)),  \mathbf{X}(0)=\mathbf{x}_0\right] \right.\nonumber\\
		&~~~~~~~~~~\cdot  \left.\left(\bar{v}(\tau,P(\mathbf{X}(\tau))+P(\boldsymbol{\nu}_j))-\bar{v}(\tau,P(\mathbf{X}(\tau))+\bar{\boldsymbol{\nu}}_j) \right)\middle| \mathbf{X}(0)=\mathbf{x}_0\right].
	\end{align}
	Moreover, Equation \eqref{eq:term1}  becomes zero for any function $f$ using 
	\begin{align}
		\bar{\boldsymbol{\nu}}_j=P(\boldsymbol{\nu}_j), j=1,\dots,J.
	\end{align}
	
	With this choice for $\bar{a}_j$ and $\bar{\boldsymbol{\nu}}_j$, we derive $\varepsilon_T=0$. The derivation holds for arbitrary bounded and smooth functions $f$, for all fixed times $T$; thus, the process $\boldsymbol{S}(t)=P(\mathbf{X}(t))$ has the same conditional distribution as $\bar{\boldsymbol{S}}(t)$ conditioned on the initial value $\mathbf{X}(0)=\mathbf{x}_0$.\\
\end{proof}

\section{Markovian Projection Cost Derivation}\label{appendix:cost}

We present details on the computational cost of MP, as provided in \eqref{eq:MPcost}:

\begin{itemize}
	\item The number of operations to generate one TL paths is given by 
	\begin{align}\label{eq:appendixWtl}
		W_{TL}(\Delta t)=\frac{T}{\Delta t} \cdot (C_{prop}+J\cdot C_{Poi}+d(J+2)),
	\end{align}
	where $C_{prop}$ is the cost of one evaluation of the propensity function \eqref{eq:prop_dynamics}. The dominant cost in \eqref{eq:appendixWtl} is $C_{Poi}$ (the cost of generating a Poisson random variable).
	\item The number of operations for the Gram--Schmidt algorithm, as described in Remark~\ref{rem:orth_basis}, is given by 
	\begin{align}\label{eq:appendixWgs}
		W_{G-S}(\#\Lambda,\Delta t,M)=\#\Lambda\cdot(C_{inner}+\#\Lambda+1)+\frac{(\#\Lambda-1)\#\Lambda}{2}(2\#\Lambda+C_{inner}),
	\end{align}
	where $C_{inner}$ is the cost of the evaluation of the empirical inner product \eqref{eq:skalar} given by
	\begin{align}\label{eq:appendixcinner}
		C_{inner}=\frac{T}{\Delta t} \cdot M (2+2C_{pol})+3 = \mathcal{O}(\frac{T}{\Delta t} \cdot M \cdot \#\Lambda).
	\end{align}
	The cost $C_{pol}$ in \eqref{eq:appendixcinner} is the computational cost for one evaluation of a polynomial in the space $<\phi_p>_{p\in\Lambda}$, which is $\mathcal{O}(\#\Lambda)$.

	In the simulations, we apply the setting $\#\Lambda \ll \frac{T}{\Delta t} \cdot M$ (see Section~\ref{sec:MPsim}, using the parameter $\#\Lambda=9$, $M=10^4$, $\frac{T}{\Delta t}=2^4$). Therefore, the dominant cost in \eqref{eq:appendixWgs} is $\mathcal{O}(M \cdot \frac{T}{\Delta t}\cdot  \left(\#\Lambda\right)^3)$.
	\item The cost $W_{L^2}(\#\Lambda,\Delta t,M)$ is split into two: the cost to (i) derive and (ii) solve the normal equation \eqref{eq:normal}. The number of operations to derive the design matrix $D$ is $M\cdot\frac{T}{\Delta t}\cdot\# \Lambda \cdot C_{pol}$, and the cost to derive one right-hand side $(\Psi^{(j)})_{j\in\mathcal{J}_{MP}}$ is $M\cdot\frac{T}{\Delta t}\cdot C_{prop}$. In \eqref{eq:normal}, the cost for the matrix product $D^\top D$ is $\mathcal{O}(\#\Lambda^2\cdot M\cdot \frac{T}{\Delta t})$, and the cost for $\#\mathcal{J}_{MP}$ matrix-vector products is $\mathcal{O}(\#\mathcal{J}_{MP}\cdot\#\Lambda\cdot  M\cdot \frac{T}{\Delta t})$. Finally, solving \eqref{eq:normal} costs $\mathcal{O}(\#\mathcal{J}_{MP} \cdot \#\Lambda^3)$, which is a nondominant term under the given setting, $\#\Lambda \ll \frac{T}{\Delta t} \cdot M$.
\end{itemize}

\end{document}